\begin{document}
\title*{Approximation and stability of solutions of SDEs driven by a symmetric $\alpha$ stable process with non-Lipschitz coefficients.}

\author{Hiroya Hashimoto}

\titlerunning{Approximation and stability of solutions of SDEs driven by a S$\alpha$S process.}

\authorrunning{H. Hashimoto}

\institute{H. Hashimoto \at Sanwa Kagaku Kenkyusho Co.,LTD. Nagoya, Japan \\ \email{hiro\_hashimoto@skk-net.com}}

\maketitle
\abstract{
Firstly, we investigate Euler-Maruyama approximation for solutions of stochastic differential equations (SDEs) driven by a symmetric $\alpha$ stable process under Komatsu condition for coefficients.
The approximation implies naturally the existence of strong solutions. 
Secondly, we study the stability of solutions under Komatsu condition, and also discuss it under Belfadli-Ouknine condition.}
\begin{keywords}
symmetric $\alpha$ stable process $\cdot$ Euler-Maruyama approximation $\cdot$ stability of solution
\end{keywords}
\section{Introduction}\label{sec:1}
Euler-Maruyama approximation is a key tool in the theory of stochastic differential equations (SDEs) as well as Picard approximation is.
In this domain the theory on stability properties of solutions is considered as one of the cornerstones.
This article is devoted firstly to study Euler-Maruyama approximation in the pathwise sense, and secondly to investigate stability problems also in the pathwise sense.

We consider these problems in the case where SDEs with non-Lipschitz coefficients are driven by a symmetric $\alpha$ stable process ($1 < \alpha < 2$).
SDEs driven by a symmetric $\alpha$ stable process more generally those of pure jumps type arise naturally in connection with applications, for example, mathematical finance.

We briefly sketch some known results in this area.
In the case where the driving process is a Brownian motion ($\alpha = 2$), Euler-Maruyama approximation in the pathwise sense has been shown under non-Lipschitz condition for coefficients (Yamada \cite{Yamada}, see also Kaneko-Nakao \cite{K-N}).

Stability problems for solutions have been developed very well by \'{E}mery \cite{Emery} and Protter \cite{Protter}, in the framework of SDEs driven by semimartingales with Lipschitz coefficients.

Stability problems in law sense for martingale problem solutions of SDEs driven by jump processes have been discussed by many authors, for example, Kasahara-Yamada \cite{Ks-Y} and Janicki-Michna-Weron \cite{J-M-W}.
A number of papers devoted to these problems are seen in the references in \cite{J-M-W}.
Stability problems in the pathwise sense for solutions of Brownian SDEs with non-Lipschitz coefficients have been discussed in Kawabata-Yamada \cite{Kw-Y}.

In the theory of SDEs driven by a symmetric $\alpha$ stable process, some non-Lipschitz conditions for coefficients which guarantee the pathwise uniqueness for solutions are known (\cite{Bass}, \cite{B-O}, \cite{H-T-Y}, \cite{Komatsu}, \cite{Tsuchiya}, \cite{Zanzotto}).
Komatsu condition (\cite{Komatsu}, see also \cite{Bass}) is an analogue of Yamada-Watanabe condition for one-dimensional Brownian SDEs. 
A Nagumo type modification of Komatsu condition is shown in \cite{H-T-Y}.
In multi-dimensional case, Tsuchiya \cite{Tsuchiya} considered rather recently the pathwise uniqueness of solutions of SDEs driven by a symmetric $\alpha$ process.
Belfadli-Ouknine condition which was found very recently can be seen as the counterpart of Nakao-Le Gall condition in the Brownian motion case (\cite{B-O}, \cite{LeGall}, \cite{Nakao}).

In this situation, it seems to be very natural to investigate Euler-Maruyama approximation as well as stability problems under some non-Lipschitz conditions in the case where SDEs are driven by a symmetric $\alpha$ stable process.

Our paper is organized as follows.

In section \ref{sec:2}
, Euler-Maruyama approximation in the pathwise sense is shown under Komatsu condition for coefficients.
Theorem \ref{th:1} in the section corresponds to the main result stated in \cite{Yamada} for Brownian SDEs.
Euler-Maruyama approximation in this section implies naturally the existence of strong solutions for SDEs driven by a symmetric $\alpha$ stable process.

In section \ref{sec:3}, the stability of solutions for SDEs in the pathwise sense under Komatsu condition is proved.
The related result in Brownian motion case has been given in \cite{Kw-Y}.

In section \ref{sec:4}, the stability of solutions also in the pathwise sense is discussed under Belfadli-Ouknine condition for coefficients.
\section{Euler-Maruyama approximation}\label{sec:2}
Let $(\Omega,{\cal F}, \{ {\cal F} _t \},{\bf P})$ be a filtered probability space with usual conditions and $Z=\{ Z(t) ; t \geq 0 \}$ be a ${\cal F}_t$-symmetric $\alpha$ stable process such that $Z(0) = 0$, $c\grave{a}dl\grave{a}g$ (right continuous left limit) ${\cal F}_t$-adapted and
\begin{eqnarray*}
{\bf E}[\exp \{ i\xi (Z(t) - Z(s)) \} | {\cal F}_s] = \exp \{ -(t-s)|\xi|^\alpha\} \ {\rm a.s. \ for \ any \ } s < t,\ \xi \in \mathbb{R} .
\end{eqnarray*}

In the present section we consider the following SDE:
\begin{eqnarray}
X(t) = X(0) + \int_0^t \sigma(s,X(s-)) dZ(s) , \label{eq:1}
\end{eqnarray}
where $Z(t)$ is a symmetric $\alpha$ stable process ($1 < \alpha < 2$), and $\sigma(t,x)$ is a Borel measurable real function with respect to $(t,x)$.

We assume that the coefficient function $\sigma(t,x)$ satisfies the following condition:\\

\noindent
Condition (A)
\begin{enumerate}[(i)]
\item
there exists a positive constant $M_1$ such that $|\sigma(t,x)| \leq M_1$,
\item
$\sigma(t,x)$ is uniformly continuous on $[0,\infty) \times \mathbb{R}$,
\item
there exists a non-negative increasing function $\rho$ defined on $[0,\infty)$ such that: $\rho(0) = 0$, $\int_{0+} \rho^{-1}(x) dx =\infty$
\begin{eqnarray*}
|\sigma(t,x)-\sigma(t,y)|^\alpha \leq \rho(|x-y|),
 \quad \forall x, \forall y\in \mathbb{R}.
\end{eqnarray*}
\end{enumerate}

\begin{remark}
The condition (A) is called Komatsu condition. Komatsu \cite{Komatsu} has proved that under the condition (A) for given bounded initial value $X(0)$ the pathwise uniqueness holds for (\ref{eq:1}) (see also \cite{Bass}).
\end{remark}

\smallskip
Let $0 < T < \infty$ be a fixed constant.
Let $\Delta$ be a partition of the interval $[0 , T]$, such that $ \Delta : 0 = t_0 < t_1 < \dots < t_k < t_{k+1} < \dots < t_n =T$ .
The norm of $\Delta$, $\| \Delta \|$ is defined as 
$\| \Delta \| := \max_{1 \leq k \leq n} (t_k - t_{k-1})$, and we put $\eta_\Delta(t):=t_k $ for $ t_k \leq t < t_{k+1}.$

Euler-Maruyama approximation for (\ref{eq:1}) is the following:
\begin{align*}
X_\Delta(0) &:= X(0),\\
{\rm and} \\
X_\Delta(t_k) &:= X_\Delta(t_{k-1}) + \sigma(t_{k-1},X_\Delta(t_{k-1}-))(Z(t_k)-Z(t_{k-1})), \quad k=1,2, \dots ,n.
\end{align*}
For $t_k \leq t < t_{k+1}$, $X_\Delta(t)$ is defined as
\begin{eqnarray*}
X_\Delta(t) := X_\Delta(t_k) + \sigma(t_k,X_\Delta(t_k-))(Z(t)-Z(t_{k})).
\end{eqnarray*}
Using the notation $\eta_\Delta$, $X_\Delta(t)$ satisfies the equation:
\begin{eqnarray*}
X_\Delta(t) := X(0) + \int_0^t\sigma(\eta_\Delta(s),X_\Delta(\eta_\Delta(s)-))dZ(s).
\end{eqnarray*}
\begin{theorem}\label{th:1}
We assume condition (A) for (\ref{eq:1}).
Let $X(t)$ be a unique solution of (\ref{eq:1}) with bounded initial value $X(0)$.
Then Euler-Maruyama approximation $X_\Delta(t)$ satisfies
\begin{eqnarray*}
\lim_{\|\Delta \| \to 0} {\bf E} \Bigl[ \sup_{0 \leq t \leq T}|X_\Delta(t) - X(t)|^\beta \Bigr] = 0 \quad for \ any \ \beta \in (1,\alpha).
\end{eqnarray*}
\end{theorem}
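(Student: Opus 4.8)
The plan is to adapt the Yamada–Watanabe technique (as in \cite{Yamada,Kw-Y}) to the $\alpha$-stable setting, controlling the difference $X_\Delta(t) - X(t)$ via a well-chosen sequence of $C^2$ approximations of $|x|^\beta$ built from the Komatsu function $\rho$. First I would establish a priori moment bounds: since $|\sigma| \le M_1$ and $X(0)$ is bounded, standard estimates for stochastic integrals against the $\alpha$-stable process (e.g.\ via the Lévy–Itô decomposition, truncating the large jumps, and using Doob/Burkholder for the compensated small-jump martingale part together with Kunita-type inequalities) give $\sup_\Delta {\bf E}[\sup_{0 \le t \le T} |X_\Delta(t)|^\beta] < \infty$ for $\beta \in (1,\alpha)$, and the same for $X(t)$. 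I would also record the one-step regularity estimate ${\bf E}\big[\sup_{\eta_\Delta(t) \le s < t}|X_\Delta(s) - X_\Delta(\eta_\Delta(t))|^\beta\big] \to 0$ as $\|\Delta\|\to 0$, again from boundedness of $\sigma$ and the scaling of the stable process.

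Next I would set up the Yamada-type test functions. Because $\int_{0+}\rho^{-1}(x)\,dx = \infty$, one can pick a decreasing sequence $a_m \downarrow 0$ with $\int_{a_m}^{a_{m-1}} \rho^{-1}(x)\,dx = m$, and build nonnegative $C^2$ functions $\psi_m$ supported on $[a_m, a_{m-1}]$ with $0 \le \psi_m \le 2\rho^{-1}(x)/(mx)$ (roughly), then set $\phi_m(x) = \int_0^{|x|}\!\!\int_0^y \psi_m(u)\,du\,dy$, so that $\phi_m(x) \uparrow |x|$, $|\phi_m'|\le 1$, and $\phi_m''(x) = \psi_m(|x|)$. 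However — and this is the point where the $\alpha$-stable case genuinely differs from the Brownian one — for a jump process one cannot simply apply Itô's formula and throw away a ``second-derivative'' term; instead the relevant quantity is the integro-differential generator, and one needs $\phi_m$ (or rather a composition accounting for the power $\beta$) to interact well with the nonlocal operator $f \mapsto \int (f(x+h)-f(x)-f'(x)h\mathbf{1}_{|h|\le 1})\,\nu(dh)$. So I would actually work with $g_m = \phi_m$ applied after a smoothing, or more precisely follow Komatsu's/Bass's device: apply Itô's formula for jump processes to $\phi_m(X_\Delta(t)-X(t))$, giving a martingale part plus a drift term of the form $\int_0^t \int_{\mathbb{R}} \big[\phi_m\big(D_\Delta(s-) + (\sigma_\Delta - \sigma)(s)\,h\big) - \phi_m(D_\Delta(s-)) - \phi_m'(D_\Delta(s-))(\sigma_\Delta-\sigma)(s)h\mathbf{1}_{|h|\le1}\big]\,\frac{c\,dh}{|h|^{1+\alpha}}\,ds$, where $D_\Delta = X_\Delta - X$ and $\sigma_\Delta(s) = \sigma(\eta_\Delta(s), X_\Delta(\eta_\Delta(s)-))$, $\sigma(s) = \sigma(s,X(s-))$. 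Using the Taylor estimate $|\phi_m(a+b)-\phi_m(a)-\phi_m'(a)b| \le \tfrac12 b^2 \sup \phi_m''$ for small $b$ and $|\phi_m(a+b)-\phi_m(a)| \le |b|$ for large $b$, together with $\int_{|h|\le r} h^2 |h|^{-1-\alpha}dh \sim r^{2-\alpha}$ and $\int_{|h|> r}|h|\cdot|h|^{-1-\alpha}dh \sim r^{1-\alpha}$, this drift is bounded by $C\,{\bf E}\int_0^t |\sigma_\Delta(s)-\sigma(s)|^\alpha\,ds \cdot (\text{quantity involving }\psi_m)$, and the precise normalization of $\psi_m$ makes the $\psi_m$-factor contribute only $O(1/m)$ after using $|\sigma_\Delta(s) - \sigma(s)|^\alpha \le \rho(|D_\Delta(\eta_\Delta(s)-)| \vee \text{osc terms})$ from Condition (A)(iii).

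Then I would split $|\sigma_\Delta(s)-\sigma(s)| \le |\sigma(\eta_\Delta(s),X_\Delta(\eta_\Delta(s)-)) - \sigma(\eta_\Delta(s), X(\eta_\Delta(s)-))| + |\sigma(\eta_\Delta(s),X(\eta_\Delta(s)-)) - \sigma(s,X(s-))|$; the first piece is handled by $\rho$ and Condition (A)(iii), the second by uniform continuity (A)(ii) together with the path-regularity estimate and a dominated-convergence/$L^\beta$-continuity argument for $X(s-)$ versus $X(\eta_\Delta(s)-)$. Collecting terms, for $H_\Delta(t) := {\bf E}[\sup_{0\le u\le t}\phi_m(D_\Delta(u))]$ (after passing the sup inside via Doob on the martingale part and Burkholder for the jump martingale) I expect an inequality of the shape $H_\Delta(t) \le \varepsilon_\Delta + \frac{C}{m} + C\int_0^t \rho^{\,*}(H_\Delta(s))\,ds$ or, after the standard Yamada trick of absorbing the concave $\rho$-term, simply $\le \varepsilon_\Delta + C/m + C\int_0^t H_\Delta(s)\,ds$ up to a concavity correction; letting $\|\Delta\|\to0$ (so $\varepsilon_\Delta\to0$) and then $m\to\infty$, Gronwall gives $\sup_{0\le t\le T}{\bf E}[|D_\Delta(t)|] \to 0$, and boosting from $L^1$ to $L^\beta$ on the supremum uses the uniform $\sup_\Delta{\bf E}[\sup_t|D_\Delta(t)|^{\alpha-\epsilon}]$ bound via interpolation/uniform integrability. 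The main obstacle I anticipate is exactly the bookkeeping in the nonlocal-generator estimate: choosing the truncation radius in the jump integral (it should scale with the size of $|\sigma_\Delta-\sigma|$, or one normalizes by that factor) and verifying that the $\psi_m$-dependent bound really collapses to $O(1/m)$ uniformly, since unlike the Brownian case there is no single ``$\phi_m''(x)\cdot(\text{diffusion coeff})^2$'' term but a whole integral — getting the $\alpha$-homogeneity to reproduce Yamada's estimate with $\rho$ in place of $\rho^2$ is the crux, and is precisely where Komatsu's condition $\int_{0+}\rho^{-1} = \infty$ (rather than the Brownian $\int_{0+}\rho^{-2}=\infty$) enters.
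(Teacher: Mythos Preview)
Your overall architecture---a priori $L^\beta$ bounds on the increments, a Yamada--Watanabe test-function argument to get pointwise convergence of $X_\Delta(t)-X(t)$, and uniform integrability to pass to the supremum---is exactly the skeleton of the paper's proof (its Lemmas~1--3). But there is a genuine gap at the step you yourself flag as ``the crux'': your choice of test function is the Brownian one, and in the $\alpha$-stable setting it does not produce the cancellation you are counting on.

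Concretely, you build $\phi_m\uparrow|x|$ with $\phi_m''=\psi_m$ supported on $[a_m,a_{m-1}]$ and $0\le\psi_m\lesssim (m\rho)^{-1}$, then hope that after It\^o's formula the drift is bounded by $|\sigma_\Delta-\sigma|^\alpha\times(\text{something involving }\psi_m)$ which collapses to $O(1/m)$. In the Brownian case the drift is literally $\tfrac12\phi_m''(D_\Delta)(\sigma_\Delta-\sigma)^2$, and the pointwise product $\psi_m(|D_\Delta|)\,\rho(|D_\Delta|)\le 2/m$ does the job. In the $\alpha$-stable case, after the scaling $h\mapsto (\sigma_\Delta-\sigma)h$ the drift becomes $|\sigma_\Delta-\sigma|^\alpha\,{\cal L}\phi_m(D_\Delta)$, and ${\cal L}\phi_m(D_\Delta)$ is \emph{not} $\psi_m(|D_\Delta|)$: the nonlocal operator smears over all jump sizes, so ${\cal L}\phi_m(x)$ is not supported where $\psi_m$ is, and for $|x|>a_{m-1}$ one has ${\cal L}\phi_m(x)\approx{\cal L}|x|\sim c\,|x|^{1-\alpha}$, which carries no $1/m$ factor at all. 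The $\rho(|D_\Delta|)\cdot\psi_m(|D_\Delta|)$ cancellation simply does not occur.

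The paper's key device, due to Komatsu, is to use instead $u(x)=|x|^{\alpha-1}$ and $u_m=u\ast\varphi_m$, where $\varphi_m$ is the bump with $0\le\varphi_m\le (m\rho)^{-1}$ on $(a_m,a_{m-1})$. Because $|x|^{\alpha-1}$ is (up to a constant) the Riesz kernel of the fractional Laplacian, one has the exact identity ${\cal L}u_m=K_\alpha\varphi_m$ with $K_\alpha=-2\pi\alpha^{-1}\cot(\alpha\pi/2)$. It\^o's formula then gives a drift equal to $K_\alpha\,\varphi_m(D_\Delta)\,|\sigma_\Delta-\sigma|^\alpha$, and now the product $\varphi_m(|D_\Delta|)\,\rho(|D_\Delta|)\le 1/m$ is the genuine $\alpha$-stable analogue of Yamada--Watanabe. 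This is precisely the missing idea in your sketch; with it, the ``bookkeeping'' you anticipate disappears entirely, and one obtains ${\bf E}[|D_\Delta(t)|^{\alpha-1}]\to 0$ directly (no Gronwall is needed).

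One further structural remark: the paper does not run a Gronwall-type inequality on $H_\Delta(t)={\bf E}[\sup_{u\le t}\phi_m(D_\Delta(u))]$ as you suggest (It\^o's formula does not commute with the supremum, and the martingale part would have to be controlled in $L^1$ of its sup, which brings back the very quantity you are estimating). Instead it first proves the pointwise-in-$t$ convergence ${\bf E}[|D_\Delta(t)|^{\alpha-1}]\to 0$ via the Komatsu identity above, and then upgrades to $\sup_{0\le t\le T}|D_\Delta(t)|\to 0$ in probability using the Gin\'e--Marcus maximal inequality
\[
{\bf P}\Bigl(\sup_{0\le t\le T}\Bigl|\int_0^t g(s)\,dZ(s)\Bigr|>\lambda\Bigr)\le C\lambda^{-\alpha}\int_0^T{\bf E}[|g(s)|^\alpha]\,ds,
\]
applied to $g=\sigma_\Delta-\sigma$; combined with your uniform-integrability argument (which is correct) this yields the $L^\beta$-convergence of the supremum.
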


For the proof of Theorem \ref{th:1}, we prepare several lemmas.

\begin{lemma}\label{lem:1}
Under the same assumption as in Theorem \ref{th:1}, we have
\begin{eqnarray*}
\lim_{|u-v| \to 0} {\bf E} [|X_\Delta(u) - X_\Delta(v)|^\beta] = 0 \quad for \ \beta \in (1,\alpha),\ uniformly \ with \ respect \ to \ \Delta.
\end{eqnarray*}
\end{lemma}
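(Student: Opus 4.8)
The plan is to use the integral representation $X_\Delta(u) - X_\Delta(v) = \int_v^u \sigma(\eta_\Delta(s), X_\Delta(\eta_\Delta(s)-))\, dZ(s)$ for $v < u$, and to estimate the $\beta$-th moment of this stochastic integral uniformly in $\Delta$. The key point is that condition (A)(i) gives the uniform bound $|\sigma(\eta_\Delta(s), X_\Delta(\eta_\Delta(s)-))| \leq M_1$, so the integrand is uniformly bounded independently of $\Delta$, of $s$, and of $\omega$. Thus the task reduces to a moment estimate for an $\alpha$-stable stochastic integral with a bounded (predictable) integrand over a short time interval of length $u-v$.

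First I would recall the standard moment inequality for stochastic integrals against a symmetric $\alpha$ stable process: for $\beta \in (1,\alpha)$ and a predictable integrand $H$ with $|H| \leq M_1$, one has ${\bf E}\bigl[ |\int_v^u H(s)\, dZ(s)|^\beta \bigr] \leq C_{\beta,\alpha} M_1^\beta |u-v|^{\beta/\alpha}$. This can be obtained either from the Bichteler--Jacod / Kunita-type $L^p$ estimates for stochastic integrals driven by Lévy processes (using the Lévy measure $\nu(dz) = c_\alpha |z|^{-1-\alpha} dz$ and the fact that $\int_{|z|\le 1} |z|^\beta \nu(dz) < \infty$ and $\int_{|z|>1} |z|^\beta \nu(dz) < \infty$ since $1<\beta<\alpha$, after compensating the large-jump part), or more directly by a scaling argument: $Z$ is $1/\alpha$-self-similar, so over an interval of length $h$ the increment behaves like $h^{1/\alpha}$ times a fixed stable random variable, whose $\beta$-th moment is finite precisely because $\beta < \alpha$. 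Combined with a time-change/Burkholder-type control of the integral, this yields the exponent $\beta/\alpha$ in $|u-v|$.

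Granting that estimate, the conclusion is immediate: ${\bf E}[|X_\Delta(u) - X_\Delta(v)|^\beta] \leq C_{\beta,\alpha} M_1^\beta |u-v|^{\beta/\alpha} \to 0$ as $|u-v| \to 0$, and the bound on the right-hand side does not involve $\Delta$ at all, which is exactly the claimed uniformity. For $v < u$ in the same mesh or in different meshes the formula for $X_\Delta(u)-X_\Delta(v)$ is unchanged, so no case distinction is needed.

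The main obstacle is purely the first step: producing a clean $\beta$-th moment bound of order $|u-v|^{\beta/\alpha}$ for the stable stochastic integral with a bounded integrand, valid for all $\beta \in (1,\alpha)$. The subtlety is that $Z$ has infinite variance (so one cannot use an $L^2$-isometry) and, for $\alpha$ close to $1$, only slightly more than a first moment; one must therefore work with $\beta < \alpha$ and invoke the appropriate Lévy stochastic-integral inequality (or a self-similarity plus domination argument) rather than the Brownian BDG inequality. Once that lemma is in hand, everything else is routine.
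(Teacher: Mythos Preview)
Your proposal is correct and follows essentially the same strategy as the paper: write $X_\Delta(u)-X_\Delta(v)$ as a stable integral with integrand bounded by $M_1$ via (A)(i), then invoke a moment bound for such integrals that depends only on $|u-v|$, giving the uniformity in $\Delta$ for free. The paper carries out the moment estimate concretely by choosing $\beta<\beta'<\alpha$, applying \'{E}mery's inequality to factor off the bounded integrand, then Burkholder--Davis--Gundy and Doob to reduce to ${\bf E}[|Z(u-v)|^{\beta'}]^{1/\beta'}$, which is exactly one realization of the inequality you propose (and, by self-similarity, equals $|u-v|^{1/\alpha}{\bf E}[|Z(1)|^{\beta'}]^{1/\beta'}$).
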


\begin{proof}
\smartqed
Choose $\beta'$ and $p > 0$ such that $1 < \beta < \beta' <\alpha$, and $1/p + 1/{\beta'} = 1/{\beta}$.
For $v \leq u$, we see
\begin{eqnarray*}
X_\Delta(u) - X_\Delta(v) = \int_v^u \sigma(\eta_\Delta(s),X_\Delta(\eta_\Delta(s)-))dZ(s).
\end{eqnarray*}
Let $[Y,Y]$ be the quadratic variation of a semimartingale $Y$ (see for examples \cite{D-M},\cite{Protter}).
By \'{E}mery's inequality (\cite{Emery}, page 191 in \cite{Protter}),
\begin{eqnarray}
\lefteqn{{\bf E} [([X_\Delta,X_\Delta](u) - [X_\Delta,X_\Delta](v))^{\beta/2} ]^{1/\beta} } \hspace{1cm} \nonumber \\
&\leq& {\bf E} \Bigl[\sup_{v \leq s \leq u}|\sigma(\eta_\Delta(s),X_\Delta(\eta_\Delta(s)-))|^p\Bigr]^{1/p} {\bf E} [([Z,Z](u) -[Z,Z](v))^{\beta'/2}]^{1/{\beta'}}. \nonumber \\
& & {} \label{eq:2}
\end{eqnarray}
Also, by Burkholder-Davis-Gundy's inequality (see for examples \cite{D-M},\cite{Protter}), we have
\begin{eqnarray}
\lefteqn{ c_\beta {\bf E} \Bigl[\sup_{v \leq s \leq u} |X_\Delta(s) - X_\Delta(v)|^\beta \Bigr]^{1/\beta} } \hspace{2cm} \nonumber \\
&\leq& {\bf E} [([X_\Delta,X_\Delta](u) - [X_\Delta,X_\Delta](v))^{\beta/2} ]^{1/\beta} \label{eq:3}
\end{eqnarray}
and
\begin{eqnarray}
\lefteqn{ {\bf E} [([Z,Z](u) -[Z,Z](v))^{\beta'/2}]^{1/{\beta'}} } \hspace{1cm} \nonumber \\
&\leq& C_{\beta'} {\bf E} \Bigl[\sup_{v \leq s \leq u} |Z(s) - Z(v)|^{\beta'}\Bigr]^{1/{\beta'}} \label{eq:4}
\end{eqnarray}
where $c_\beta$ and $C_{\beta'}$ are positive constants which depend on $\beta$ and $\beta'$ with respectively.
By (i) in (A), the right-hand side of (\ref{eq:2}) can be bounded by $M_1{\bf E} [([Z,Z](u) - [Z,Z](v))^{{\beta'}/2}]^{1/{\beta'}}$.
Then by (\ref{eq:2}), (\ref{eq:3}) and (\ref{eq:4}), we have
\begin{eqnarray*}
{\bf E} \Bigl[ \sup_{v \leq s \leq u} |X_\Delta(s) - X_\Delta(v)|^\beta \Bigr]^{1/\beta}
\leq \frac{C_{\beta'}}{c_\beta} M_1 {\bf E} \Bigl[ \sup_{v \leq s \leq u}|Z(s) -Z(v)|^{\beta'}\Bigr]^{1/{\beta'}}.
\end{eqnarray*}
Using Doob's inequality (see for example \cite{R-Y})
\begin{eqnarray*}
{\bf E} \Bigl[ \sup_{0 \leq s \leq u-v} |Z(s)|^{\beta'} \Bigr]^{1/{\beta'}}
\leq \frac{\beta'}{\beta'-1} {\bf E} [|Z(u-v)|^{\beta'}]^{1/{\beta'}},
\end{eqnarray*}
we have
\begin{eqnarray*}
{\bf E} \Bigl[ \sup_{v \leq s \leq u} |X_\Delta(s) - X_\Delta(v)|^\beta \Bigr]^{1/\beta}
\leq \frac{C_{\beta'}}{c_\beta} \frac{\beta'}{\beta'-1} M_1 {\bf E}[|Z(u-v)|^{\beta'}]^{1/{\beta'}}.
\end{eqnarray*}
The right-hand side in the above inequality does not depend on $\Delta$.
Thus, we can conclude
\begin{eqnarray*}
\lim_{|u - v| \to 0} \|X_\Delta(u) - X_\Delta(v) \|_{L^\beta} = 0,\quad {\rm uniformly \ with \ respect \ to \ \Delta}.
\end{eqnarray*}
\qed
\end{proof}

\begin{lemma}\label{lem:2}
Under the same assumption as in Theorem \ref{th:1}, we have
\begin{eqnarray*}
\lim_{ \| \Delta \| \to 0} {\bf E} [|X_\Delta(t) - X(t)|^{\alpha-1} ] = 0,
\quad for \ t \in [0,T].
\end{eqnarray*}
\end{lemma}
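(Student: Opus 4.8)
The plan is to carry out Komatsu's pathwise–uniqueness argument in the time-discretised setting, the only new point being that the coefficient difference is governed by $\rho$ only after the time-shift $\eta_\Delta$ and the oscillation of $X_\Delta$ have been separated off. First I would fix the auxiliary functions. We may assume $\rho$ is bounded (replace it by $\rho\wedge(2M_1)^\alpha$, which affects neither (iii), since $|\sigma|\le M_1$, nor the divergence of $\int_{0+}\rho^{-1}$). Then Komatsu's construction (\cite{Komatsu}, see also \cite{Bass}), the $\alpha$-stable analogue of the Yamada--Watanabe functions built around the $\mathcal{L}$-harmonic profile $|x|^{\alpha-1}$ rather than $|x|$, yields $C^2$ functions $\phi_m$ with $0\le\phi_m\le|x|^{\alpha-1}$, $\| |x|^{\alpha-1}-\phi_m \|_\infty\to 0$, $\|\phi_m'\|_\infty<\infty$ and $\|\phi_m''\|_\infty<\infty$ for each $m$, and, crucially, $\rho(|y|)\,\mathcal{L}\phi_m(y)\le c_m$ for all $y$, where $\mathcal{L}$ is the generator of $Z$ and $c_m\downarrow 0$. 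Since $1<\alpha<2$, splitting $\int_{\mathbb{R}}[\phi_m(y+u)-\phi_m(y)-\phi_m'(y)u]\,|u|^{-1-\alpha}\,du$ at $|u|=1$ and using the boundedness of $\phi_m',\phi_m''$ shows $K_m:=\|\mathcal{L}\phi_m\|_\infty<\infty$ for each fixed $m$; that $K_m$ is finite but blows up with $m$ is exactly what dictates the order of the two limits below.

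Next set $Y_\Delta(t)=X_\Delta(t)-X(t)=\int_0^t c_\Delta(s)\,dZ(s)$ with $c_\Delta(s)=\sigma(\eta_\Delta(s),X_\Delta(\eta_\Delta(s)-))-\sigma(s,X(s-))$, apply It\^o's formula to $\phi_m(Y_\Delta(t))$, localise by $\tau_N=\inf\{t:|X_\Delta(t)|\vee|X(t)|\ge N\}$ and let $N\to\infty$. The $dZ$-integral and the compensated jump integral produced by It\^o's formula are true martingales of mean zero (their integrands are bounded, $Z(t)\in L^\beta$ for every $\beta\in(1,\alpha)$, and $\|\phi_m'\|_\infty,\|\phi_m''\|_\infty<\infty$ with $1<\alpha<2$ give the requisite integrability with respect to $\nu(dz)\,ds$), and ${\bf E}[\phi_m(Y_\Delta(t))]\le{\bf E}[|Y_\Delta(t)|^{\alpha-1}]<\infty$; using also the scaling identity $\int_{\mathbb{R}}[\phi_m(y+cz)-\phi_m(y)-\phi_m'(y)cz]\,\nu(dz)=|c|^\alpha\,\mathcal{L}\phi_m(y)$ for the symmetric $\alpha$-stable L\'evy measure $\nu$, one gets
\[
{\bf E}\big[\phi_m(Y_\Delta(t))\big]={\bf E}\int_0^t|c_\Delta(s)|^\alpha\,\mathcal{L}\phi_m(Y_\Delta(s-))\,ds .
\]
Now decompose $c_\Delta(s)=\big(\sigma(\eta_\Delta(s),X_\Delta(\eta_\Delta(s)-))-\sigma(\eta_\Delta(s),X(s-))\big)+\big(\sigma(\eta_\Delta(s),X(s-))-\sigma(s,X(s-))\big)$, bound the first bracket via (iii), $|X_\Delta(\eta_\Delta(s)-)-X(s-)|\le|X_\Delta(\eta_\Delta(s)-)-X_\Delta(s-)|+|Y_\Delta(s-)|$ and monotonicity of $\rho$, and the second via the modulus of continuity $\omega$ of $\sigma$ from (ii) (so it is $\le\omega(\|\Delta\|)$), to obtain, with $C=C(\alpha)$ (the factor $2$ from $\rho(a+b)\le\rho(2a)+\rho(2b)$ being harmless since $\rho(2\,\cdot)$ is again admissible),
\[
|c_\Delta(s)|^\alpha\le C\,\rho\big(|Y_\Delta(s-)|\big)+C\,\rho\big(|X_\Delta(\eta_\Delta(s)-)-X_\Delta(s-)|\big)+C\,\omega(\|\Delta\|)^\alpha .
\]

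Distinguishing the sign of $\mathcal{L}\phi_m$, I would apply $\rho(|y|)\,\mathcal{L}\phi_m(y)\le c_m$ to the first term and $\mathcal{L}\phi_m\le K_m$ to the other two, integrate, and invoke Lemma~\ref{lem:1}: since $|\eta_\Delta(s)-s|\le\|\Delta\|$, it gives $\sup_{s\le T}{\bf E}[|X_\Delta(\eta_\Delta(s)-)-X_\Delta(s-)|^\beta]\to 0$ as $\|\Delta\|\to 0$, and then, $\rho$ being bounded, Chebyshev's inequality yields $\sup_{s\le T}{\bf E}[\rho(|X_\Delta(\eta_\Delta(s)-)-X_\Delta(s-)|)]=:\delta(\|\Delta\|)\to 0$. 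Together with ${\bf E}[|Y_\Delta(t)|^{\alpha-1}]\le{\bf E}[\phi_m(Y_\Delta(t))]+\| |x|^{\alpha-1}-\phi_m \|_\infty$ this produces
\[
{\bf E}\big[|Y_\Delta(t)|^{\alpha-1}\big]\le C\,c_m\,t+\| |x|^{\alpha-1}-\phi_m \|_\infty+C\,K_m\,t\,\big(\delta(\|\Delta\|)+\omega(\|\Delta\|)^\alpha\big).
\]
Given $\varepsilon>0$, first choose $m$ so that $C c_m t+\| |x|^{\alpha-1}-\phi_m \|_\infty<\varepsilon/2$, and then, $m$ being fixed, choose $\|\Delta\|$ small enough that $C K_m t(\delta(\|\Delta\|)+\omega(\|\Delta\|)^\alpha)<\varepsilon/2$; this proves the lemma.

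The hard part is the interaction of the two limits. Komatsu's functions provide a $\Delta$-free smallness only through $c_m\to 0$, whereas the constant $K_m=\|\mathcal{L}\phi_m\|_\infty$ multiplying the discretisation-dependent errors (the time-shift from (ii) and the oscillation $X_\Delta(\eta_\Delta(s)-)-X_\Delta(s-)$ controlled by Lemma~\ref{lem:1}) diverges with $m$, so one cannot let $m\to\infty$ freely and must freeze $m$ before sending $\|\Delta\|\to 0$. The whole scheme therefore rests on two facts: $\mathcal{L}\phi_m$ is bounded for each fixed $m$, and Lemma~\ref{lem:1} controls that oscillation uniformly in $\Delta$. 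A secondary technical point is that $\phi_m$ is not globally convex ($|x|^{\alpha-1}$ is concave away from the origin), so the integrand $\phi_m(\cdot+cz)-\phi_m(\cdot)-\phi_m'(\cdot)cz$ in It\^o's formula is signed; its integrability against $\nu(dz)\,ds$, needed both to justify the displayed identity and to drop the compensated jump martingale, is again secured by $\|\phi_m'\|_\infty,\|\phi_m''\|_\infty<\infty$ and $1<\alpha<2$.
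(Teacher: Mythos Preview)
Your proof is correct and follows essentially the same route as the paper's: Komatsu's mollified functions $u_m=|x|^{\alpha-1}\ast\varphi_m$ with $\mathcal{L}u_m=K_\alpha\varphi_m$, It\^o's formula applied to $u_m(X_\Delta-X)$, a split of the coefficient difference into a part governed by $\rho(|Y_\Delta|)\varphi_m(Y_\Delta)\le 1/m$ and a discretisation remainder handled by Lemma~\ref{lem:1} together with the uniform continuity of $\sigma$, then fixing $m$ before sending $\|\Delta\|\to 0$. The only cosmetic differences are that the paper splits at $\sigma(s,X_\Delta(s))$ rather than at $\sigma(\eta_\Delta(s),X(s-))$ (so the $\rho$-term directly contains $|X_\Delta(s)-X(s)|$ and no $\rho(2\cdot)$ adjustment is needed), and that $\mathcal{L}u_m=K_\alpha\varphi_m\ge 0$ everywhere, so your sign discussion is in fact unnecessary.
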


\begin{proof}
\smartqed

As is well known (for example see \cite{Komatsu}) that the generator ${\cal L}$ of $Z(t)$ is defined by
\begin{eqnarray*}
{\cal L}f(x) := \int[f(x+y)-f(x)-I_{\{ |y| \leq 1\}}yf'(x)]|y|^{-1-\alpha} dy.
\end{eqnarray*}
We choose a sequence $ \{ a_m \} $ such that $ 1=a_0 > a_1 > \cdots $ and \\
$ \int_{a_m}^{a_{m-1}} \rho^{-1}(x)dx = m$. 
For this choice we can choose a sequence of sufficiently smooth even functions $ \{ \varphi_m \}$ such that,
\begin{eqnarray}
\varphi_m(x)=\begin{cases}
          0  & |x| \leq a_m \\
          {\rm between} \ 0 \ {\rm and} \ 1/(m\rho(x))  & a_m < |x| < a_{m-1} \\
          0  & |x| \geq a_{m-1}  \label{eq:5}
          \end{cases}
\end{eqnarray}
and $\int_{-\infty}^\infty \varphi_m(x) dx = 1$. 
Following the argument employed by Komatsu \cite{Komatsu}, if we put $u(x) := |x|^{\alpha-1}$ and also $u_m := u \ast \varphi_m$ where $\ast$ stands for the convolution operetor, then we have ${\cal L}u_m=K_\alpha \varphi_m$, where $K_\alpha=-2 \pi \alpha^{-1} \cot(\alpha \pi / 2)$. 
We note that $K_\alpha$ does not depend on $m$.

By the definition of $u_m$, we have
\begin{eqnarray}
|x|^{\alpha-1} - a_{m-1}^{\alpha-1} 
 \leq u_m(x) 
 \leq |x|^{\alpha-1} + a_{m-1}^{\alpha-1}. \label{eq:6}
\end{eqnarray}

On the other hand , It\^{o} formula implies
\begin{eqnarray*}
\lefteqn{u_m(X_\Delta(t) - X(t))} \hspace{1cm} \\
&=& K_\alpha \int_0^t \varphi_m(X_\Delta(s) - X(s)) |\sigma(\eta_\Delta(s),X_\Delta(\eta_\Delta(s)-)) - \sigma(s,X(s-))|^\alpha ds \\
& & {} + M_m(t) 
\end{eqnarray*}
where $M_m(t)$ is a martingale.
So, we have
\begin{eqnarray*}
\lefteqn{{\bf E} [u_m(X_\Delta(t) - X(t))]} \hspace{1cm} \\
&=& K_\alpha {\bf E} \Bigl[ \int_0^t \varphi_m(X_\Delta(s) - X(s)) \\
& & \qquad |\sigma(\eta_\Delta(s),X_\Delta(\eta_\Delta(s)-))
        - \sigma(s,X(s-))|^\alpha ds \Bigr].
\end{eqnarray*}

Using the left-hand side inequality of (\ref{eq:6}), we have
\begin{eqnarray*}
0 
& \leq & {\bf E} [|X_\Delta(t) - X(t)|^{\alpha-1}] \\
& \leq & a_{m-1}^{\alpha-1} 
    + K_\alpha {\bf E} \Bigl[ \int_0^t \varphi_m(X_\Delta(s) - X(s)) \\
& & \qquad |\sigma(\eta_\Delta(s),X_\Delta(\eta_\Delta(s)-))
      - \sigma(s,X(s-))|^\alpha ds \Bigr] \\
& \leq & a_{m-1}^{\alpha-1} 
    + 2K_\alpha {\bf E} \Bigl[\int_0^t \| \varphi_m \| 
   |\sigma(\eta_\Delta(s),X_\Delta(\eta_\Delta(s)-))
    - \sigma(s,X_\Delta(s))|^\alpha ds \Bigr] \\
& &{} + 2K_\alpha {\bf E} \Bigl[ \int_0^t \varphi_m(X_\Delta(s) - X(s)) 
   |\sigma(s,X_\Delta(s))
    - \sigma(s,X(s-))|^\alpha ds \Bigr] \\
&=& a_{m-1}^{\alpha-1} + J_\Delta^{(1)} + J_\Delta^{(2)} \quad {\rm say.}
\end{eqnarray*}
By (iii) in (A), (\ref{eq:5}) implies $J_\Delta^{(2)} \leq (2 K_\alpha T)/m$.
Let $\varepsilon > 0$ be fixed, we choose a fixed integer $m$ such that $a_{m-1}^{\alpha-1} < \varepsilon / 3$ and $J_\Delta^{(2)} < \varepsilon / 3$.
By (ii) in (A), we choose $\delta_1 > 0$ such that, for $|t - t'| < \delta_1, \ |x - x'| < \delta_1$
\begin{eqnarray}
|\sigma(t,x) - \sigma(t',x')| < \frac{\varepsilon}{12 K_\alpha T \|\varphi_m\|} \label{eq:7}
\end{eqnarray}
holds.
Lemma \ref{lem:1} implies immediately 
\begin{eqnarray*}
\lim_{|u-v| \to 0} |X_\Delta(u) - X_\Delta(v)| = 0 \quad  {\rm (in \ probability),\ uniformly \ with \ respect \ to \ \Delta }.
\end{eqnarray*}
So we can choose $\delta_2 > 0$ such that for $|u - v| < \delta_2$
\begin{eqnarray}
{\bf P}(|X_\Delta(u) - X_\Delta(v)| > \delta_1) \leq \frac{\varepsilon}{12 K_\alpha T \| \varphi_m \|(2M_1)^\alpha} \label{eq:8}
\end{eqnarray}
holds.

Let $\|\Delta\| \leq \min(\delta_1,\delta_2)$. Then we have $|t-\eta_\Delta(t)| \leq \min(\delta_1,\delta_2)$. The inequalities (\ref{eq:7}) and (\ref{eq:8}) imply
\begin{eqnarray*}
J_\Delta^{(1)}
&=& 2 K_\alpha \| \varphi_m \| 
    {\bf E} \Bigl[ I_{\{ | X_\Delta(s) - X_\Delta(\eta_\Delta(s)) | \leq \delta_1\} } \\
& & \quad {} \int_0^t |\sigma(s,X(s)) - \sigma(\eta_\Delta(s),X_\Delta(\eta_\Delta(s)-))|^\alpha ds \Bigr] \\
& & + 2 K_\alpha \| \varphi_m \| 
    {\bf E} \Bigl[ I_{\{ | X_\Delta(s) - X_\Delta(\eta_\Delta(s)) | > \delta_1\} } \\
& & \quad {} \int_0^t |\sigma(s,X(s)) - \sigma(\eta_\Delta(s),X_\Delta(\eta_\Delta(s)-))|^\alpha ds \Bigr] \\
&<& 2 K_\alpha \| \varphi_m \| T\frac{\varepsilon}{12 K_\alpha T \| \varphi_m \|}
   + 2 K_\alpha \| \varphi_m \| T\frac{\varepsilon (2M_1)^\alpha}{12 K_\alpha T \| \varphi_m \|(2M_1)^\alpha}\\
&=& \frac{\varepsilon}{3}.
\end{eqnarray*}
Thus, we can conclude that 
\begin{eqnarray*}
\lim_{ \| \Delta \| \to 0 } {\bf E} [|X_\Delta(t) - X(t)|^{\alpha-1} ] = 0.
\end{eqnarray*}
\qed
\end{proof}

\begin{lemma}\label{lem:3}
Under the same assumption as in Theorem \ref{th:1},
\begin{enumerate}[{\rm {[}a{]}}]
\item
$ \sup_{0 \leq t \leq T}|X_\Delta(t) - X(t)| \to 0$ in probability $ ( \| \Delta \| \to 0 )$,
\item
the class of random variables
\begin{eqnarray*}
\Bigl\{ \sup_{0 \leq t \leq T} | X_\Delta(t) - X(t) | ^ \beta, \Delta \Bigr\}
\end{eqnarray*}
is uniformly integrable.
\end{enumerate}
\end{lemma}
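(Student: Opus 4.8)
The plan is to treat both parts through the semimartingale
\[
Y_\Delta(t):=X_\Delta(t)-X(t)=\int_0^t H_\Delta(s)\,dZ(s),\qquad H_\Delta(s):=\sigma(\eta_\Delta(s),X_\Delta(\eta_\Delta(s)-))-\sigma(s,X(s-)),
\]
whose integrand satisfies $|H_\Delta(s)|\le 2M_1$ by (i) in (A). For [b] I would fix exponents $\beta<\beta'<\beta''<\alpha$ and simply rerun the chain of \'{E}mery, Burkholder--Davis--Gundy and Doob inequalities from the proof of Lemma \ref{lem:1} (cf.\ (\ref{eq:2})--(\ref{eq:4})), now with $Y_\Delta$ on $[0,T]$ in the role of $X_\Delta(\cdot)-X_\Delta(v)$ on $[v,u]$ and with $(\beta',\beta'')$ in the role of $(\beta,\beta')$ there. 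Since $|H_\Delta|\le 2M_1$ and $Z(T)\in L^{\beta''}$ (as $\beta''<\alpha$), this yields a constant $K=K(\beta',\beta'',T,M_1,\alpha)$, independent of $\Delta$, with ${\bf E}\big[\sup_{0\le t\le T}|Y_\Delta(t)|^{\beta'}\big]\le K$. As $\beta'/\beta>1$, the family $\{\sup_{0\le t\le T}|Y_\Delta(t)|^{\beta}\}_\Delta$ is then bounded in $L^{\beta'/\beta}$ and hence uniformly integrable, which is [b].

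For [a] the point is to upgrade the single-time convergence ${\bf E}[|X_\Delta(t)-X(t)|^{\alpha-1}]\to0$ of Lemma \ref{lem:2} to convergence of the supremum, and I would do this in three steps. First, for each fixed $s\in[0,T]$ one has $H_\Delta(s)\to0$ in probability as $\|\Delta\|\to0$: indeed $X_\Delta(s)\to X(s)$ in probability by Lemma \ref{lem:2}, $|X_\Delta(\eta_\Delta(s))-X_\Delta(s)|\to0$ in probability by Lemma \ref{lem:1} (as $|\eta_\Delta(s)-s|\le\|\Delta\|$), and, $Z$ having no fixed time of discontinuity, $X_\Delta(\eta_\Delta(s)-)=X_\Delta(\eta_\Delta(s))$ and $X(s-)=X(s)$ a.s.; the uniform continuity (ii) in (A) then gives $H_\Delta(s)\to0$ in probability. (This is, in essence, the estimate already carried out via (\ref{eq:7})--(\ref{eq:8}) inside the proof of Lemma \ref{lem:2}.) Second, since $0\le I_{\{|H_\Delta(s)|>\varepsilon\}}\le 1$, bounded convergence applied to $s\mapsto{\bf P}(|H_\Delta(s)|>\varepsilon)$ gives $\int_0^T{\bf P}(|H_\Delta(s)|>\varepsilon)\,ds\to0$, i.e.\ $H_\Delta\to0$ in $dt\otimes d{\bf P}$-measure on $[0,T]\times\Omega$. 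Third, given any sequence $\|\Delta_n\|\to0$, I pass to a subsequence along which $H_{\Delta_n}\to0$ for $dt\otimes d{\bf P}$-a.e.\ $(s,\omega)$; since $|H_{\Delta_n}|\le 2M_1$, a constant and so integrable with respect to $Z$, the dominated convergence theorem for stochastic integrals (\cite{Protter}, Ch.~IV; see also \cite{Emery}) gives $\sup_{0\le t\le T}\big|\int_0^t H_{\Delta_n}\,dZ\big|=\sup_{0\le t\le T}|X_{\Delta_n}(t)-X(t)|\to0$ in probability along that subsequence. As the sequence was arbitrary, $\sup_{0\le t\le T}|X_\Delta(t)-X(t)|\to0$ in probability, which is [a]. (Together, [a] and [b] then give Theorem \ref{th:1}.)

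The delicate step is the third one, i.e.\ converting convergence at each fixed time into uniform convergence. A direct pathwise attack fails: bounding $\sup_{0\le t\le T}|X_\Delta(t)-X(t)|$ by $\max_i|X_\Delta(s_i)-X(s_i)|$ over an auxiliary partition $\{s_i\}$ of mesh $\delta$ plus the oscillations of $X_\Delta$ and $X$ on the sub-intervals $[s_i,s_{i+1})$, one controls, by Lemma \ref{lem:1}, each oscillation in $L^\beta$ by a term of order ${\bf E}[|Z(\delta)|^{\beta'}]^{1/\beta'}$, which is of order $\delta^{1/\alpha}$ by self-similarity of $Z$; a union bound over the $\sim T/\delta$ sub-intervals then produces a term of order $\delta^{\beta/\alpha-1}$, which diverges as $\delta\to0$ precisely because $\beta<\alpha$. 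One is therefore forced to use the $dt\otimes d{\bf P}$-integrability of the integrand together with the continuity of the stochastic-integral map, which is exactly what the dominated convergence theorem for stochastic integrals supplies.
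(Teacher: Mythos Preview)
Your treatment of [b] is exactly the paper's: rerun the \'{E}mery/Burkholder--Davis--Gundy/Doob chain with the bounded integrand $|H_\Delta|\le 2M_1$ to get a $\Delta$-free bound on ${\bf E}[\sup_{t\le T}|Y_\Delta(t)|^{\beta'}]$ for some $\beta'>\beta$.

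For [a] you and the paper agree up to and including your second step: both arguments reduce to $H_\Delta(s)\to 0$ in probability at each fixed $s$ (via Lemmas~\ref{lem:1}, \ref{lem:2} and the uniform continuity of $\sigma$), and hence $\int_0^T{\bf E}[|H_\Delta(s)|^\alpha]\,ds\to 0$ by bounded convergence. The divergence is in the final step. The paper does not go through a subsequence/DCT argument at all; it applies the Gin\'{e}--Marcus maximal inequality (\cite{G-M}, \cite{Applebaum}) for integrals against a symmetric $\alpha$-stable process,
\[
{\bf P}\Bigl(\sup_{0\le t\le T}\Bigl|\int_0^t H_\Delta\,dZ\Bigr|>\lambda\Bigr)\le C\lambda^{-\alpha}\int_0^T{\bf E}\bigl[|H_\Delta(s)|^\alpha\bigr]\,ds,
\]
which converts the $L^\alpha(dt\otimes d{\bf P})$ control of $H_\Delta$ directly into the uniform-in-$t$ statement. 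This is a one-line replacement for your third step and your closing paragraph.

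Your third step, as written, has a genuine soft spot. Protter's dominated convergence theorem for stochastic integrals (Ch.~IV) is stated for c\`agl\`ad integrands with $H^n\to H$ a.s.\ \emph{pointwise in $t$}; its proof passes through $\int_0^t (H^n-H)^2\,d[Z,Z]$ and a pathwise Lebesgue DCT with respect to the random measure $d[Z,Z]$. For a pure-jump driver such as the symmetric $\alpha$-stable $Z$, $[Z,Z]_t=\sum_{s\le t}(\Delta Z_s)^2$ is carried by the (Lebesgue-null) jump times, so $dt\otimes d{\bf P}$-a.e.\ convergence of $H_{\Delta_n}$ does \emph{not} by itself feed that theorem. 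Your conclusion $\sup_t|\int_0^t H_{\Delta_n}\,dZ|\to 0$ in probability is nonetheless correct, but the clean justification is precisely Gin\'{e}--Marcus: bounded convergence gives ${\bf E}\int_0^T|H_{\Delta_n}|^\alpha\,ds\to 0$, and the maximal inequality above finishes. In other words, the ``continuity of the stochastic-integral map'' you want to invoke here is, for this driver, exactly the content of Gin\'{e}--Marcus rather than the semimartingale DCT.
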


\begin{proof}
\smartqed

We consider the probability ${\bf P}( \sup_{0 \leq t \leq T} | X_\Delta(t) - X(t)| > \lambda)$.
By Gin\'{e}-Marcus's inequality (page 213 in \cite{Applebaum}, \cite{G-M}) there exists a constant $C > 0$ independent of $\lambda > 0$, such that
\begin{eqnarray}
\lefteqn{ {\bf P} \Bigl( \sup_{0 \leq t \leq T} |X_\Delta(t) - X(t) | > \lambda \Bigr) \nonumber} \\
&=& {\bf P} \Bigl( \sup_{0 \leq t \leq T} \Bigl| \int_0^t \{\sigma(\eta_\Delta(s),X_\Delta(\eta_\Delta(s)-)) - \sigma(s,X(s-))\}dZ(s) \Bigr| > \lambda \Bigr) \nonumber \\
&\leq& \lambda^{-\alpha} C \int_0^T {\bf E} [|\sigma(\eta_\Delta(s),X_\Delta(\eta_\Delta(s)-)) - \sigma(s,X(s-))|^\alpha]ds. \nonumber \\
& & {} \label{eq:9}
\end{eqnarray}
Noticing that
\begin{eqnarray*}
\lefteqn{ |X_\Delta(\eta_\Delta(s)-) - X(s-)| } \hspace{1cm} \\
&\leq& |X_\Delta(\eta_\Delta(s)-) - X_\Delta(s-)| + |X_\Delta(s-) - X(s-)|,
\end{eqnarray*}
and also note that for fixed $u \in [0,T]$, $X_\Delta(u) = X_\Delta(u-)$ a.s. hold.
By Lemma \ref{lem:1} and Lemma \ref{lem:2}, we can conclude that $|X_\Delta(\eta_\Delta(s)-) - X(s-)|$ converges to 0 in probability when $\|\Delta\| \to 0$.
Since the function $\sigma(t,x)$ is bounded and uniformly continuous with respect to $(t,x)$, the inequality (\ref{eq:9}) implies [a].

Choose $\beta', \tilde{\beta},p $ such that $\beta < {\beta'} < \tilde{\beta} < \alpha$, $1 / \tilde{\beta} + 1 / p = 1 / \beta'$.
By an analogous argument as in the proof of Lemma \ref{lem:1}, we have
\begin{eqnarray*}
\lefteqn{ {\bf E} \Bigl[ \sup_{0 \leq t \leq T} |X_\Delta(t) - X(t)|^{\beta'} \Bigr]^{1/{\beta'}} } \hspace{1cm} \\
&\leq& \frac{1}{c_{\beta'}} {\bf E} [([X_\Delta - X , X_\Delta - X](T))^{{\beta'}/2}]^{1/{\beta'}} \\
&\leq& \frac{1}{c_{\beta'}} {\bf E} \Bigl[\sup_{0 \leq t \leq T} |\sigma(\eta_\Delta(s),X_\Delta(\eta_\Delta(s)-)) - \sigma(s,X(s-))|^p \Bigr]^{1/p} {\bf E}[([Z,Z](T))^{\tilde{\beta}/2}]^{1/{\tilde{\beta}}} \\
&\leq& \frac{2M_1}{c_{\beta'}} {\bf E} [([Z,Z](T))^{{\tilde{\beta}}/2}]^{1/{\tilde{\beta}}} \\
&\leq& 2M_1 \frac{C_{\tilde{\beta}}}{{c_{\beta'}}} {\bf E} \Bigl[ \sup_{0 \leq t \leq T} |Z(t)|^{\tilde{\beta}}\Bigr]^{1/\tilde{\beta}} \\
&\leq& 2M_1 \frac{C_{\tilde{\beta}}}{{c_{\beta'}}} \frac{\tilde{\beta}}{\tilde{\beta}-1} {\bf E} [|Z(T)|^{\tilde{\beta}}]^{1/\tilde{\beta}} \\
&<& \infty .
\end{eqnarray*}
Thus, we can conclude that the class $ \{ \sup_{0 \leq t \leq T} |X_\Delta(t) - X(t)|^\beta, \Delta \}$ is uniformly integrable.
\qed
\end{proof}

\noindent
\textit{Proof of Theorem \ref{th:1}.}
\smartqed
Lemma \ref{lem:3} implies immediately Theorem \ref{th:1}.
\qed

\begin{remark}[A consturuction of the strong solution of (\ref{eq:1})]
Let $X_\Delta(t)$ and $X_{\Delta '}(t)$ be two Euler-Maruyama approximations with the same bounded intial value $X(0)$.
By an analogous method emplyed in the proof of Theorem \ref{th:1}, we can show
\begin{eqnarray}
\lim_{\| \Delta \| \to 0, \| \Delta ' \| \to 0,} {\bf E} \Bigl[ \sup_{0 \leq t \leq T} |X_\Delta(t) - X_{\Delta '}(t)|^\beta \Bigr] = 0, \qquad \forall \beta < \alpha \label{eq:10}
\end{eqnarray}
without using the existance of a solution of (\ref{eq:1}).
From this fact we can construct a strong solution of (\ref{eq:1}) with initial value $X(0)$, in the following way.

Choose a sequence of positive numbers $\varepsilon_i > 0, i=1,2,\cdots$ such that
\begin{eqnarray}
\sum_{i=1}^\infty 4^i \varepsilon_i < \infty . \label{eq:11}
\end{eqnarray}
By (\ref{eq:10}) we can choose a series of partitions $\Delta_i, i=1,2,\cdots$ such that
\begin{enumerate}[(i)]
\item
$ \| \Delta_i \| \to 0$ ($i \to \infty$),
\item
$ {\bf E}[\sup_{0 \leq t \leq T} |X_{\Delta_i}(t) - X_{\Delta_{i+1}}(t)|^\beta] < \varepsilon_i$, $i=1,2,\cdots.$
\end{enumerate}
Since
\begin{eqnarray*}
\lefteqn{ {\bf P} \Bigl( \sup_{0 \leq t \leq T} |X_{\Delta_i}(t) - X_{\Delta_{i+1}}(t)| > \frac{1}{2^i} \Bigr) \hspace{1cm}} \\
&=& {\bf P} \Bigl( \sup_{0 \leq t \leq T} |X_{\Delta_i}(t) - X_{\Delta_{i+1}}(t)|^\beta > \bigl( \frac{1}{2^i} \bigr)^\beta \Bigr) \\
&\leq& {\bf P} \Bigl( \sup_{0 \leq t \leq T} |X_{\Delta_i}(t) - X_{\Delta_{i+1}}(t)|^\beta > \frac{1}{4^i} \Bigr) \\
&\leq& 4^i {\bf E} \Bigl[ \sup_{0 \leq t \leq T} |X_{\Delta_i}(t) - X_{\Delta_{i+1}}(t)|^\beta \Bigr] \\
&<& 4^i \varepsilon_i,
\end{eqnarray*}
we have by (\ref{eq:11}),
\begin{eqnarray*}
\sum_{i=1}^\infty {\bf P} \Bigl( \sup_{0 \leq t \leq T} |X_{\Delta_i}(t) - X_{\Delta_{i+1}}(t)| > \frac{1}{2^i} \Bigr) 
< \sum_{i=1}^\infty 4^i \varepsilon_i 
< \infty.
\end{eqnarray*}
Then, by Borel-Cantelli lemma $X_{\Delta_i}(t)$ converges uniformly on $[0,T]$ a.s. $(i \to \infty)$.
Put $X(t) := \lim_{i \to \infty} X_{\Delta_i}(t)$, $t \in [0,T]$.
Then, we see that $X(t)$ is a $c\grave{a}dl\grave{a}g$, and we have
\begin{eqnarray*}
\lim_{i \to \infty} {\bf E} \Bigl[\sup_{0 \leq t \leq T} |X_{\Delta_i}(t) - X(t)|^\beta \Bigr] = 0.
\end{eqnarray*}
We will show that $(X(t), Z(t))$ satisfies (\ref{eq:1}).
For the purpose of the proof, we have
\begin{eqnarray*}
& &{\bf E} \Bigl[\sup_{0 \leq t \leq T} \Bigl| X(t) - X(0) - \int_0^t \sigma(s,X(s-))dZ(s) \Bigr|^\beta \Bigr] \\
&\leq& 2{\bf E} \Bigl[ \sup_{0 \leq t \leq T} |X(t) - X_{\Delta_i}(t)|^\beta \Bigr] \\
& & {} + 2{\bf E} \Bigl[ \sup_{0 \leq t \leq T} \Bigl| \int_0^t \{\sigma(s,X(s-)) - \sigma(\eta_{\Delta_i}(s),X_{\Delta_i}(\eta_{\Delta_i}(s)-))\}dZ(s) \Bigr|^\beta \Bigr] \\
&=& {\bf E} [N_i^{(1)}] + {\bf E} [N_i^{(2)}] \quad {\rm say.}
\end{eqnarray*}
Obviously, we have $ \lim_{i \to \infty} {\bf E}[N_i^{(1)}] = 0$.
By the same discussion employed in the proof of Lemma \ref{lem:3}, we can see that $ \lim_{i \to \infty} {\bf E}[N_i^{(2)}] = 0$.
Thus, we have
\begin{eqnarray*}
{\bf E} \Bigl[ \sup_{0 \leq t \leq T} \Bigl| X(t) - X(0) - \int_0^t \sigma(s,X(s-))dZ(s) \Bigr|^\beta \Bigr] = 0.
\end{eqnarray*}
So, we can conclude
\begin{eqnarray*}
X(t) = X(0) + \int_0^t \sigma(s,X(s-))dZ(s).
\end{eqnarray*}
By the definition of Euler-Maruyama approximation, $X_{\Delta_i}(t)$ is $\sigma(Z(s); 0 \leq s \leq t)$ measurable.
This implies immediately $X(t)$ is $\sigma(Z(s); 0 \leq s \leq t)$ measurable.
It means that $X(t)$ is a strong soluiton of (\ref{eq:1}).
\end{remark}
\section{Stability of solutions under Komatsu condition}\label{sec:3}
Consider the following sequence of SDEs driven by a same symmetric $\alpha$ stable process $Z(t)$:
\begin{eqnarray}
X(t) &=& X(0) + \int_0^t \sigma(s,X(s-)) dZ(s).  \label{eq:12} \\
X_n(t) &=& X_n(0) + \int_0^t \sigma_n(s,X_n(s-)) dZ(s), \quad {\rm n=1,2,\cdots} \label{eq:13}
\end{eqnarray}
We assume that the coefficient functions $\sigma(t,x)$, $\sigma_n(t,x)$, $n=1,2,\cdots$ satisfy the following condition. \\

\noindent
Condition (B)
\begin{enumerate}[(i)]
\item
there exists a positive constant $M_2$ such that $|\sigma(t,x)| \leq M_2$, $|\sigma_n(t,x)| \leq M_2$, $n=1,2,\cdots$,
\item
$\lim_{n \to \infty} \sup_{t,x} |\sigma_n(t,x) - \sigma(t,x)| = 0$,
\item
there exists an increasing function $\rho$ defined on $[0,\infty)$ such that $\rho(0) = 0$, $\int_{0+} \rho^{-1}(x) dx = \infty$
\begin{eqnarray*}
|\sigma_n(t,x)-\sigma_n(t,y)|^\alpha &\leq& \rho(|x-y|),
 \quad \forall x, \forall y \in \mathbb{R}, \ t \in [0,\infty), \ n=1,2,\cdots \\
|\sigma(t,x)-\sigma(t,y)|^\alpha &\leq& \rho(|x-y|),
 \quad \forall x, \forall y \in \mathbb{R}, \ t \in [0,\infty) \\
\end{eqnarray*}
\end{enumerate}

\begin{remark}
The pathwise uniqueness holds for solutions of (\ref{eq:12}) and (\ref{eq:13}) under the condition (B) (\cite{Bass}, \cite{Komatsu}).
\end{remark}

The main result of this section is following:

\begin{theorem}\label{th:2}
Let $T > 0$ be fixed. 
Assume that there exists a positive number $M_0$ such that $|X(0)| \leq M_0$, a.s. and $|X_n(0)| \leq M_0$, a.s., $n=1,2,\cdots$ .
Assume also that
\begin{eqnarray*}
\lim_{n \to \infty} {\bf E} [ |X_n(0) - X(0)|^\alpha ] = 0 .
\end{eqnarray*}
Then under the condition (B)
\begin{eqnarray*}
\lim_{n \to \infty} {\bf E} \Bigl[ \sup_{0 \leq t \leq T}|X_n(t) - X(t)|^\beta \Bigr] = 0 
\end{eqnarray*}
holds, for $\beta < \alpha$.
\end{theorem}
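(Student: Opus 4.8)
\noindent\emph{Proof strategy.}
The plan is to repeat, in the present setting, the three-step scheme used for Theorem~\ref{th:1}: first an $L^{\alpha-1}$-estimate at a fixed time (the analogue of Lemma~\ref{lem:2}), then convergence in probability of the supremum together with uniform integrability (the analogue of Lemma~\ref{lem:3}), and finally the elementary passage from these two facts to convergence in $L^\beta$. The only new features compared with Theorem~\ref{th:1} are the non-zero initial error $X_n(0)-X(0)$ and the two-coefficient difference $\sigma_n-\sigma$; these are absorbed using the extra hypotheses ${\bf E}[|X_n(0)-X(0)|^\alpha]\to0$ and (ii) in (B).

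\smallskip
\noindent\emph{Step 1.}
I would first show $\lim_{n\to\infty}{\bf E}[|X_n(t)-X(t)|^{\alpha-1}]=0$ for every $t\in[0,T]$. Keeping the sequences $\{a_m\}$, $\{\varphi_m\}$ and the functions $u(x)=|x|^{\alpha-1}$, $u_m=u\ast\varphi_m$ from the proof of Lemma~\ref{lem:2}, so that ${\cal L}u_m=K_\alpha\varphi_m$, one applies It\^{o}'s formula to $u_m(X_n(t)-X(t))$ exactly as there and takes expectations:
\begin{eqnarray*}
{\bf E}[u_m(X_n(t)-X(t))] &=& {\bf E}[u_m(X_n(0)-X(0))] \\
& & {}+K_\alpha{\bf E}\Bigl[\int_0^t\varphi_m(X_n(s)-X(s))|\sigma_n(s,X_n(s-))-\sigma(s,X(s-))|^\alpha\,ds\Bigr].
\end{eqnarray*}
Using the left inequality of (\ref{eq:6}), the elementary bound $|a+b|^\alpha\le2^{\alpha-1}(|a|^\alpha+|b|^\alpha)$ applied to $\sigma_n(s,X_n(s-))-\sigma(s,X(s-))=\bigl(\sigma_n(s,X_n(s-))-\sigma(s,X_n(s-))\bigr)+\bigl(\sigma(s,X_n(s-))-\sigma(s,X(s-))\bigr)$, condition (iii) in (B), and $\varphi_m(y)\rho(|y|)\le1/m$ from (\ref{eq:5}) (together with $X_n(s)=X_n(s-)$, $X(s)=X(s-)$ a.s. at fixed $s$), one gets
\begin{eqnarray*}
0 &\le& {\bf E}[|X_n(t)-X(t)|^{\alpha-1}] \\
&\le& {\bf E}[u_m(X_n(0)-X(0))]+\frac{2^{\alpha-1}K_\alpha T}{m}+2^{\alpha-1}K_\alpha T\|\varphi_m\|\sup_{t,x}|\sigma_n(t,x)-\sigma(t,x)|^\alpha,
\end{eqnarray*}
and the right inequality of (\ref{eq:6}) together with Jensen's inequality gives ${\bf E}[u_m(X_n(0)-X(0))]\le{\bf E}[|X_n(0)-X(0)|^\alpha]^{(\alpha-1)/\alpha}+a_{m-1}^{\alpha-1}$. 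Given $\varepsilon>0$, I would first fix $m$ so that $a_{m-1}^{\alpha-1}+2^{\alpha-1}K_\alpha T/m<\varepsilon/2$, and then, with this $m$ frozen, use (ii) in (B) and ${\bf E}[|X_n(0)-X(0)|^\alpha]\to0$ to make the remaining two terms $<\varepsilon/2$ for all large $n$.

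\smallskip
\noindent\emph{Step 2.}
Next, as in Lemma~\ref{lem:3}, I would prove that $\sup_{0\le t\le T}|X_n(t)-X(t)|\to0$ in probability and that $\{\sup_{0\le t\le T}|X_n(t)-X(t)|^\beta:n\ge1\}$ is uniformly integrable. From
\begin{eqnarray*}
X_n(t)-X(t)=(X_n(0)-X(0))+\int_0^t\{\sigma_n(s,X_n(s-))-\sigma(s,X(s-))\}\,dZ(s),
\end{eqnarray*}
Gin\'{e}-Marcus's inequality bounds ${\bf P}\bigl(\sup_{0\le t\le T}\bigl|\int_0^t\{\sigma_n(s,X_n(s-))-\sigma(s,X(s-))\}dZ(s)\bigr|>\lambda\bigr)$ by $C\lambda^{-\alpha}\int_0^T{\bf E}[|\sigma_n(s,X_n(s-))-\sigma(s,X(s-))|^\alpha]\,ds$. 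Splitting the integrand as in Step~1, the $\sigma_n-\sigma$ part is dominated by $\sup_{t,x}|\sigma_n(t,x)-\sigma(t,x)|^\alpha\to0$; for $|\sigma(s,X_n(s-))-\sigma(s,X(s-))|^\alpha$ I would combine Step~1 (which gives $X_n(s-)-X(s-)\to0$ in probability for each fixed $s$) with the boundedness and uniform continuity of $\sigma$ to see it tends to $0$ in $L^\alpha$ for each $s$; since the integrand is bounded by $(2M_2)^\alpha$, dominated convergence then makes the whole time-integral tend to $0$. As $|X_n(0)-X(0)|\to0$ in probability too, [a]-type convergence follows. For uniform integrability I would repeat the chain of \'{E}mery, Burkholder-Davis-Gundy and Doob inequalities from the proof of Lemma~\ref{lem:3}, with $2M_1$ replaced by $2M_2$ and with an extra term ${\bf E}[|X_n(0)-X(0)|^{\beta'}]^{1/\beta'}\le2M_0$ (for $\beta<\beta'<\tilde\beta<\alpha$), which bounds ${\bf E}[\sup_{0\le t\le T}|X_n(t)-X(t)|^{\beta'}]$ uniformly in $n$ and hence gives the uniform integrability of the $\beta$-th powers.

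\smallskip
\noindent\emph{Step 3 and the main obstacle.}
Convergence in probability together with uniform integrability yields $\lim_{n\to\infty}{\bf E}[\sup_{0\le t\le T}|X_n(t)-X(t)|^\beta]=0$, which is the assertion. I expect the main obstacle to be Step~1: one must verify that It\^{o}'s formula applied to the Komatsu regularization $u_m=u\ast\varphi_m$ of $|x|^{\alpha-1}$ still yields the exact identity ${\cal L}u_m=K_\alpha\varphi_m$ in this two-solution situation, and then arrange the three resulting error terms so that the $m$-dependent factor $\|\varphi_m\|$ (which blows up as $m\to\infty$) does no harm because $m$ is chosen and frozen before $n\to\infty$, while the initial-value term is controlled by ${\bf E}[|X_n(0)-X(0)|^\alpha]\to0$. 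Once Step~1 is in place, Steps~2 and~3 follow the arguments already carried out for Lemma~\ref{lem:3} and Theorem~\ref{th:1}.
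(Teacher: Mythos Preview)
Your proposal is correct and follows essentially the same three-step scheme as the paper (Lemma~\ref{lem:4}, Lemma~\ref{lem:5}, then the immediate conclusion). The one noteworthy variation is in Step~1: you freeze $m$ and then send $n\to\infty$ so that the term $\|\varphi_m\|\sup_{t,x}|\sigma_n-\sigma|^\alpha$ vanishes for fixed $m$, whereas the paper couples $m=m_n\to\infty$ to $n$ via the condition $\varepsilon_n\max_{a_{m_n}\le x\le a_{m_n-1}}\rho(x)^{-1}\le1$ (with $\varepsilon_n:=\sup_{t,x}|\sigma_n-\sigma|^\alpha$), which lets both error terms be bounded simultaneously by $2K_\alpha T/m_n$; either organization of the limits is valid.
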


For the proof of Theorem \ref{th:2}, we prepare folloing lemmas.
\begin{lemma}\label{lem:4}
Under the same assumption as in Theorem \ref{th:2},
\begin{eqnarray*}
\lim_{n \to \infty} {\bf E} [|X_n(t) - X(t)|^{\alpha-1} ] = 0 \ holds, \ t \in [0,T].
\end{eqnarray*}
\end{lemma}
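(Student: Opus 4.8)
The plan is to repeat the proof of Lemma \ref{lem:2} with the pair $(X_\Delta,X)$ replaced by $(X_n,X)$ and condition (A) replaced by condition (B). Put $Y_n(t):=X_n(t)-X(t)$, so that
\begin{eqnarray*}
Y_n(t)=Y_n(0)+\int_0^t\{\sigma_n(s,X_n(s-))-\sigma(s,X(s-))\}\,dZ(s),
\end{eqnarray*}
and keep the very same sequences $\{a_m\}$, $\{\varphi_m\}$ and functions $u(x):=|x|^{\alpha-1}$, $u_m:=u\ast\varphi_m$ as in the proof of Lemma \ref{lem:2}, so that ${\cal L}u_m=K_\alpha\varphi_m$ and (\ref{eq:6}) stay available.

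First I apply It\^{o}'s formula to $u_m(Y_n(t))$. Since $Y_n$ is a pure-jump semimartingale whose jump at time $s$ is $\{\sigma_n(s,X_n(s-))-\sigma(s,X(s-))\}\Delta Z(s)$, the scaling of the $\alpha$-stable jump measure turns this into the factor $|\sigma_n(s,X_n(s-))-\sigma(s,X(s-))|^\alpha$ in front of ${\cal L}u_m$, exactly as in Lemma \ref{lem:2}; taking expectations and using (\ref{eq:6}) twice yields
\begin{eqnarray*}
0\le{\bf E}[|Y_n(t)|^{\alpha-1}]\le{\bf E}[|Y_n(0)|^{\alpha-1}]+2a_{m-1}^{\alpha-1}+K_\alpha{\bf E}\Bigl[\int_0^t\varphi_m(Y_n(s))\,|\sigma_n(s,X_n(s-))-\sigma(s,X(s-))|^\alpha\,ds\Bigr].
\end{eqnarray*}
Writing $\sigma_n(s,X_n(s-))-\sigma(s,X(s-))=\{\sigma_n(s,X_n(s-))-\sigma(s,X_n(s-))\}+\{\sigma(s,X_n(s-))-\sigma(s,X(s-))\}$, using $|a+b|^\alpha\le 2(|a|^\alpha+|b|^\alpha)$ (valid since $\alpha<2$) and $\varphi_m\le\|\varphi_m\|$ in the first piece, the last expectation is at most $J_n^{(1)}+J_n^{(2)}$ with
\begin{eqnarray*}
J_n^{(1)}:=2K_\alpha\|\varphi_m\|\,{\bf E}\Bigl[\int_0^t|\sigma_n(s,X_n(s-))-\sigma(s,X_n(s-))|^\alpha\,ds\Bigr],\quad J_n^{(2)}:=2K_\alpha{\bf E}\Bigl[\int_0^t\varphi_m(Y_n(s))\,|\sigma(s,X_n(s-))-\sigma(s,X(s-))|^\alpha\,ds\Bigr].
\end{eqnarray*}

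The bound $J_n^{(2)}\le(2K_\alpha T)/m$, uniform in $n$, is obtained exactly as for $J_\Delta^{(2)}$ in Lemma \ref{lem:2}: on the support of $\varphi_m$, (\ref{eq:5}) gives $\varphi_m(Y_n(s))\le 1/(m\rho(|Y_n(s)|))$, while (iii) in (B) gives $|\sigma(s,X_n(s-))-\sigma(s,X(s-))|^\alpha\le\rho(|Y_n(s-)|)$, so for Lebesgue-a.e.\ $s$ (where $Y_n(s)=Y_n(s-)$) the integrand is $\le 1/m$. For $J_n^{(1)}$, fix $m$ so that $\|\varphi_m\|$ is a fixed constant; then (ii) in (B) gives $|\sigma_n(s,X_n(s-))-\sigma(s,X_n(s-))|^\alpha\le(\sup_{t,x}|\sigma_n(t,x)-\sigma(t,x)|)^\alpha$, hence $J_n^{(1)}\le 2K_\alpha\|\varphi_m\|\,T\,(\sup_{t,x}|\sigma_n-\sigma|)^\alpha\to 0$ as $n\to\infty$. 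Finally ${\bf E}[|Y_n(0)|^{\alpha-1}]\le{\bf E}[|Y_n(0)|^\alpha]^{(\alpha-1)/\alpha}\to 0$ by Jensen's inequality and the hypothesis on the initial values. Combining: for $\varepsilon>0$ one first fixes $m$ with $2a_{m-1}^{\alpha-1}+(2K_\alpha T)/m<2\varepsilon/3$, then $N$ with ${\bf E}[|Y_n(0)|^{\alpha-1}]+J_n^{(1)}<\varepsilon/3$ for $n\ge N$, which gives ${\bf E}[|Y_n(t)|^{\alpha-1}]<\varepsilon$ for every $n\ge N$ and every $t\in[0,T]$.

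Structurally nothing here is harder than in Lemma \ref{lem:2}. The only genuinely new ingredient is the initial-data term ${\bf E}[|Y_n(0)|^{\alpha-1}]$, controlled by the assumed $L^\alpha$-convergence of the initial conditions; and the role played in Lemma \ref{lem:2} by Lemma \ref{lem:1} (uniform continuity of the paths) is played here by the much cheaper uniform convergence $\sigma_n\to\sigma$ of (ii) in (B). The single point calling for a moment's care is the It\^{o}/generator identity for the pure-jump semimartingale $Y_n$ that produces the factor $|\sigma_n(s,X_n(s-))-\sigma(s,X(s-))|^\alpha$ multiplying ${\cal L}u_m$, but this is exactly the computation already invoked (with reference to \cite{Komatsu}) in the proof of Lemma \ref{lem:2}.
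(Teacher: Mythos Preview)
Your proof is correct and uses the same Komatsu mollification $u_m=u\ast\varphi_m$, the same It\^o--generator identity ${\cal L}u_m=K_\alpha\varphi_m$, and the same splitting $\sigma_n(X_n)-\sigma(X)=[\sigma_n(X_n)-\sigma(X_n)]+[\sigma(X_n)-\sigma(X)]$ as the paper. The only technical divergence is in how the double limit in $m$ and $n$ is organized. You follow the template of Lemma~\ref{lem:2} faithfully: fix $m$ first (making $2a_{m-1}^{\alpha-1}$ and $J_n^{(2)}\le 2K_\alpha T/m$ small uniformly in $n$), bound $\varphi_m\le\|\varphi_m\|$ in $J_n^{(1)}$, and then send $n\to\infty$. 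The paper instead couples $m$ to $n$ through a diagonal sequence $m_n\to\infty$ chosen so that $\varepsilon_n\max_{a_{m_n}\le x\le a_{m_n-1}}\rho(x)^{-1}\le 1$, where $\varepsilon_n:=\sup_{t,x}|\sigma_n-\sigma|^\alpha$; this yields the symmetric estimate $N_{m_n}^{(1)}\le 2K_\alpha T/m_n$ without ever invoking $\|\varphi_m\|$, and a single limit $n\to\infty$ finishes. Your two-step $\varepsilon/3$ argument is slightly more elementary and closer in spirit to Lemma~\ref{lem:2}; the paper's diagonal choice is a bit slicker in that it packages the two limits into one pass.
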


\begin{proof}
\smartqed
Put $u(x):=|x|^{\alpha-1}$.
Choose a sequence $ \{ a_m \} $ such that $ 1=a_0 > a_1 > \cdots $, $ \lim \limits_{m \to \infty} a_m =0 $ and $ \int_{a_m}^{a_{m-1}} \rho^{-1}(x)dx = m$.
For this choice, choose a sequence $ \{ \varphi_m \}, \ m=1,2,\cdots$ of sufficiently smooth even functions such that
\begin{eqnarray}
\varphi_m(x)=\begin{cases}
          0  & |x| \leq a_m \\
          {\rm between} \ 0 \ {\rm and} \ 1/(m\rho(x))  & a_m < |x| < a_{m-1} \label{eq:14}\\
          0  & |x| \geq a_{m-1}
          \end{cases}
\end{eqnarray}
and $\int_{-\infty}^\infty \varphi_m(x) dx = 1$. 
After Komatsu, put $u_m:=u \ast \varphi_m$, then we have ${\cal L} u_m = K_\alpha \varphi_m$, where $K_\alpha = -2 \pi \alpha^{-1} \cot(\alpha \pi/2)$.
By (ii) of (B) we can choose a sequence $\{ \varepsilon_n \} $ of decreasing positive numbers $\varepsilon_n \downarrow 0$ such that
\begin{eqnarray}
\sup_{t,x} |\sigma_n(t,x) - \sigma(t,x)|^\alpha \leq \varepsilon_n  \label{eq:15}.
\end{eqnarray}
Corresponding this choice, we can find a sequence of integer numbers $\{ m_n\}, \ n=1,2,\cdots$, $m_n \to \infty \ (n \to \infty)$ such that
\begin{eqnarray}
\varepsilon_n \max_{a_{m_n} \leq x \leq a_{m_n-1}} \frac{1}{\rho(x)} \leq 1 \label{eq:16}.
\end{eqnarray}
By It\^{o} formula,
\begin{eqnarray*}
\lefteqn{u_{m_n}(X_n(t) - X(t)) - u_{m_n}(X_n(0) - X(0))} \\
&=& \int_0^t |\sigma_n(s,X_n(s-)) - \sigma(s,X(s-))|^\alpha K_\alpha \varphi_{m_n}(X_n(s) - X(s))ds \\
& & {} + M_{m_n}(t) ,
\end{eqnarray*}
where $M_{m_n}(t)$ is a martingale.
By (\ref{eq:6})
\begin{eqnarray*}
& & {\bf E} [u_{m_n}(X_n(t) - X(t))] \\
&\leq& {\bf E} [|X_n(0) - X(0)|^{\alpha - 1}] +a_{m_n-1}^{\alpha - 1} \\
& & {} + 2 {\bf E} \Bigl[ \int_0^t |\sigma_n(s,X_n(s-)) - \sigma(s,X_n(s-))|^\alpha K_\alpha \varphi_{m_n}(X_n(s) - X(s))ds\Bigr] \\ 
& & {} + 2 {\bf E} \Bigl[ \int_0^t |\sigma(s,X_n(s-)) - \sigma(s,X(s-))|^\alpha K_\alpha \varphi_{m_n}(X_n(s) - X(s))ds \Bigr] \\
&=& {\bf E} [|X_n(0) - X(0)|^{\alpha - 1}] +a_{m_n-1}^{\alpha - 1} + N_{m_n}^{(1)} + N_{m_n}^{(2)} \qquad {\rm say.}
\end{eqnarray*}
By the assumption, $\lim_{n \to \infty} {\bf E} [|X_n(0) - X(0)|^{\alpha - 1}] = 0$ holds.
Obviously $\lim_{n \to \infty} a_{m_n - 1}^{\alpha - 1} = 0$.

For $N_{m_n}^{(1)}$, by (\ref{eq:14}), (\ref{eq:15}) and (\ref{eq:16}) we have
\begin{eqnarray*}
N_{m_n}^{(1)} \leq 2 {\bf E} \Bigl[\int_0^t \varepsilon_n K_\alpha \frac{I_{\{a_{m_n} < |X_n(s) - X(s)| <a_{m_n - 1}\}}}{m_n \rho(|X_n(s) - X(s)|)}ds \Bigr] \leq \frac{2 K_\alpha T}{m_n}.
\end{eqnarray*}
From this $\lim_{n \to \infty} N_{m_n}^{(1)} = 0 $ follows immediately.

For $N_{m_n}^{(2)}$, by (iii) of (B) and by the definition of $\varphi_m$, (\ref{eq:14}) we have
\begin{eqnarray*}
N_{m_n}^{(2)} \leq 2 {\bf E} \Bigl[ \int_0^t \rho(|X_n(s) - X(s)|) K_\alpha \frac{1}{m_n \rho(|X_n(s) - X(s)|)}ds \Bigr] \leq \frac{2 K_\alpha T}{m_n}.
\end{eqnarray*}
From this $\lim_{n \to \infty} N_{m_n}^{(2)} = 0 $ holds.

Note that $\lim_{n \to \infty} u_{m_n}(x) = u(x) = |x|^{\alpha - 1}$, then we have $\lim_{n \to \infty} {\bf E}[|X_n(t) - X(t)|^{\alpha - 1}] = 0$. 
\qed
\end{proof}

\begin{lemma}\label{lem:5}
Under the same assumption as in Theorem \ref{th:2},
\begin{enumerate}[{\rm {[}a{]}}]
\item
$\sup_{0 \leq t \leq T}|X_n(t) - X(t)| \to 0$ in probability ($n \to \infty $),
\item
the family of random variables
\begin{eqnarray*}
\Bigl\{ \sup_{0 \leq t \leq T} | X_n(t) - X(t) |^\beta , \ n = 1, 2, \cdots \Bigr\}
\end{eqnarray*}
is uniformly integrable.
\end{enumerate}
\end{lemma}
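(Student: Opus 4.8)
The plan is to mirror the proof of Lemma \ref{lem:3} almost line for line, replacing the Euler approximation $X_\Delta$ by $X_n$, the limit $\|\Delta\|\to0$ by $n\to\infty$, and invoking Lemma \ref{lem:4} in place of Lemma \ref{lem:2}. The only structural difference from the setting of Lemma \ref{lem:3} is that here the initial values $X_n(0)$ and $X(0)$ need not coincide, so an extra (harmless) term $X_n(0)-X(0)$ must be carried through both parts.

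\smallskip
For part [a], I would begin from the representation
\begin{eqnarray*}
X_n(t) - X(t) = (X_n(0) - X(0)) + \int_0^t \{\sigma_n(s,X_n(s-)) - \sigma(s,X(s-))\}\,dZ(s),
\end{eqnarray*}
and apply Gin\'{e}--Marcus's inequality to the stochastic integral to obtain, for every $\lambda>0$,
\begin{eqnarray*}
{\bf P}\Bigl(\sup_{0\le t\le T}|X_n(t)-X(t)|>\lambda\Bigr)
&\le& {\bf P}(|X_n(0)-X(0)|>\lambda/2) \\
& & {} + (\lambda/2)^{-\alpha} C\int_0^T {\bf E}[|\sigma_n(s,X_n(s-))-\sigma(s,X(s-))|^\alpha]\,ds.
\end{eqnarray*}
The first term tends to $0$ by the assumption ${\bf E}[|X_n(0)-X(0)|^\alpha]\to0$. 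For the integral I would split
\begin{eqnarray*}
\sigma_n(s,X_n(s-))-\sigma(s,X(s-)) = \{\sigma_n(s,X_n(s-))-\sigma(s,X_n(s-))\} + \{\sigma(s,X_n(s-))-\sigma(s,X(s-))\};
\end{eqnarray*}
the first bracket is bounded by $\sup_{t,x}|\sigma_n-\sigma|\to0$ by (ii) of (B), while for the second I note that, for each fixed $s$, $X_n(s-)=X_n(s)$ and $X(s-)=X(s)$ a.s. and, by Lemma \ref{lem:4}, $X_n(s)-X(s)\to0$ in probability; since $\sigma(s,\cdot)$ is bounded and continuous, ${\bf E}[|\sigma(s,X_n(s-))-\sigma(s,X(s-))|^\alpha]\to0$ for each $s$, and since the integrand is dominated by $(2M_2)^\alpha$, dominated convergence shows the whole time-integral tends to $0$. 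This yields [a].

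\smallskip
For part [b], I would reproduce the moment estimate in the second half of the proof of Lemma \ref{lem:3}. Choosing $\beta',\tilde\beta,p$ with $\beta<\beta'<\tilde\beta<\alpha$ and $1/\tilde\beta+1/p=1/\beta'$, using $|X_n(0)-X(0)|\le 2M_0$ together with the elementary inequality $(a+b)^{\beta'}\le 2^{\beta'}(a^{\beta'}+b^{\beta'})$, then Burkholder--Davis--Gundy's inequality, \'{E}mery's inequality, the bound $|\sigma_n-\sigma|\le 2M_2$, and Doob's inequality (exactly as in Lemma \ref{lem:3}), I obtain
\begin{eqnarray*}
{\bf E}\Bigl[\sup_{0\le t\le T}|X_n(t)-X(t)|^{\beta'}\Bigr]^{1/\beta'}
\le 4M_0 + 4M_2\,\frac{C_{\tilde\beta}}{c_{\beta'}}\,\frac{\tilde\beta}{\tilde\beta-1}\,{\bf E}[|Z(T)|^{\tilde\beta}]^{1/\tilde\beta} < \infty,
\end{eqnarray*}
with a bound independent of $n$. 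Hence $\{\sup_{0\le t\le T}|X_n(t)-X(t)|^\beta\}$ is bounded in $L^{\beta'/\beta}$ with $\beta'/\beta>1$, which gives the claimed uniform integrability.

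\smallskip
The moment estimates in [b] are routine, being essentially copied from Lemma \ref{lem:3} with the single extra initial-value term. The only genuinely delicate point is the passage in [a] from the \emph{per-time} convergence in probability provided by Lemma \ref{lem:4} to the convergence of $\int_0^T{\bf E}[|\sigma(s,X_n(s-))-\sigma(s,X(s-))|^\alpha]\,ds$; here the boundedness and uniform continuity of $\sigma$ reduce this to a dominated-convergence argument (using $X_n(s)=X_n(s-)$, $X(s)=X(s-)$ a.s. at fixed $s$), so the obstacle is mild. Finally, as with the passage from Lemma \ref{lem:3} to Theorem \ref{th:1}, Lemma \ref{lem:5} immediately implies Theorem \ref{th:2}.
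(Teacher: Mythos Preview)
Your proposal is correct and follows essentially the same route as the paper's proof: Gin\'{e}--Marcus's inequality together with a triangle-inequality splitting and Lemma~\ref{lem:4} for [a], and for [b] the paper simply says ``just the same argument employed in the proof of Lemma~\ref{lem:3}'', which is exactly the BDG/\'{E}mery/Doob chain you write out (with the extra initial-value term handled as you indicate). The only cosmetic difference is the choice of intermediate term in the splitting: the paper writes $\sigma_n(X_n)-\sigma(X)=[\sigma_n(X_n)-\sigma_n(X)]+[\sigma_n(X)-\sigma(X)]$ and bounds the first bracket by $\rho(|X_n-X|)$ via (iii) of (B), whereas you pass through $\sigma(X_n)$ and use continuity of $\sigma(s,\cdot)$; both variants work equally well.
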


\begin{proof}
\smartqed

Let $\lambda$ be a positive constant.
Then, by Gin\'{e}-Marcus's inequality, there exists a constant $C > 0$ which does not depend on $\lambda$ such that
\begin{eqnarray*}
\lefteqn{ {\bf P} \Bigl( \sup_{0 \leq t \leq T} |X_n(t) - X(t) | > \lambda \Bigr) } \hspace{1cm}\\
&\leq& {\bf P} \Bigl( |X_n(0) - X(0) | > \frac{\lambda}{2} \Bigr) \\
& & {} + {\bf P} \Bigl( \sup_{0 \leq t \leq T} \Bigl| \int_0^t \{\sigma_n(s,X_n(s-)) - \sigma(s,X(s-))\}dZ(s) \Bigr| > \frac{\lambda}{2} \Bigr) \\
&\leq& {\bf P} \Bigl( |X_n(0) - X(0) | > \frac{\lambda}{2} \Bigr) \\
& & {} + \Bigl( \frac{\lambda}{2} \Bigr)^{-\alpha} C \int_0^T {\bf E} [|\sigma_n(s,X_n(s-)) - \sigma_n(s,X(s-))|^\alpha]ds. \\
& & {} + \Bigl( \frac{\lambda}{2} \Bigr)^{-\alpha} C \int_0^T {\bf E} [|\sigma_n(s,X(s-)) - \sigma(s,X(s-))|^\alpha]ds. \\
&=& J_n^{(1)} + J_n^{(2)} + J_n^{(3)} \quad {\rm say}.
\end{eqnarray*}
By the assumption on initial data, $\lim_{n \to \infty} J_n^{(1)} = 0$ is obvious.
By (ii) of (B), $\lim_{n \to \infty} J_n^{(3)} = 0 $ holds.
By (iii) of (B) we have
\begin{eqnarray*}
 J_n^{(2)} \leq \Bigl( \frac{\lambda}{2} \Bigr)^{-\alpha} C \int_0^T {\bf E} [\rho(|X_n(s-) - X(s-)|)]ds.
\end{eqnarray*}
By the result of Lemma \ref{lem:4}, for fixed $s \in [0,T]$, $X_n(s-) \to X(s-)$ in probability ($ n \to \infty $).
Since $\rho$ is continuous and $\rho(0) = 0$, we have $\lim_{n \to \infty} {\bf E} [\rho(|X_n(s-) - X(s-)|)] = 0 $.
Note that ${\bf E} [\rho(|X_n(s-) - X(s-)|)]$ is uniformly bounded with respect to $s \in [0,T]$.
Then by Lebesgue convergence theorem $\lim_{n \to \infty} J_n^{(2)} = 0 $ holds.%
Thus, we can conclude that [a] follows.

Just the same argument employed in the proof of Lemma \ref{lem:3} implies [b].
\qed
\end{proof}

\noindent
\textit{Proof of Theorem \ref{th:2}.}
\smartqed
Lemma \ref{lem:5} implies immediately Theorem \ref{th:2}.
\qed
\section{Stability of solutions under Belfadli-Ouknine condition}\label{sec:4}
In this section we consider the following sequence of SDEs:
\begin{eqnarray}
X(t) &=& X(0) + \int_0^t \sigma(X(s-)) dZ(s) \label{eq:17} \\
X_n(t) &=& X_n(0) + \int_0^t \sigma_n(X_n(s-)) dZ(s), \quad n=1,2,\cdots \label{eq:18}
\end{eqnarray}
where $Z(t)$ is a symmetric $\alpha$ stable process $(1 < \alpha < 2)$, and coefficient functions $\sigma$, $\sigma_n$ are Borel measurable.

We assume that coefficient functions $\sigma$, $\sigma_n, \ n=1,2,\cdots$ satisfy the following condition. \\

\noindent
Condition (C)
\begin{enumerate}[(i)]
\item
there exists two positive constants $d, K$ such that $0 < d < K < \infty$,
\begin{eqnarray*}
d \leq \sigma(x) &\leq& K \quad \forall x \in \mathbb{R} \\
d \leq \sigma_n(x) &\leq& K \quad \forall x \in \mathbb{R}, \ n=1,2,\cdots
\end{eqnarray*}
\item
there exist an increasing function $f$ such that for every real numbers $x,y$
\begin{eqnarray*}
|\sigma(x)-\sigma(y)|^\alpha &\leq& |f(x) - f(y)| \\
|\sigma_n(x)-\sigma_n(y)|^\alpha &\leq& |f(x) - f(y)|, \quad n=1,2,\cdots
\end{eqnarray*}
\item
\begin{eqnarray*}
\lim_{n \to \infty} \sup_x |\sigma_n(x) - \sigma(x)| = 0.
\end{eqnarray*}
\end{enumerate}

\begin{remark}
Belfadli and Ouknine \cite{B-O} show that under the (ii) of (C) the pathwise uniqueness holds for each solutions of (\ref{eq:17}) and (\ref{eq:18}).
\end{remark}

Let $ T > 0$ be fixed.

\begin{theorem}\label{th:3}
Assume that there exists a positive number $\tilde{M}_0$ such that $|X(0)| \leq \tilde{M}_0$ a.s., $|X_n(0)| \leq \tilde{M}_0$ a.s. $n=1,2,\cdots$ and assume also
\begin{eqnarray*}
\lim_{n \to \infty} {\bf E}[|X_n(0) - X(0)|^\alpha] = 0.
\end{eqnarray*}
Then, under the condition (C)
\begin{eqnarray*}
\lim_{n \to \infty} {\bf E} \Bigl[ \sup_{0 \leq t \leq T} |X_n(t) - X(t)|^\beta \Bigr] = 0
\end{eqnarray*}
holds for $ \beta < \alpha $.
\end{theorem}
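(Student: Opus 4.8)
The plan is to mirror the strategy used for Theorem \ref{th:2}: establish convergence in probability of $\sup_{0\le t\le T}|X_n(t)-X(t)|$ together with uniform integrability of the $\beta$-th powers, and then invoke the standard fact that convergence in probability plus uniform integrability of the $\beta$-th powers yields convergence in $L^\beta$. The uniform integrability part is exactly as in Lemma \ref{lem:3}/Lemma \ref{lem:5}: using (i) of (C) the coefficients are bounded by $K$, so \'Emery's inequality, Burkholder--Davis--Gundy, and Doob's inequality give a bound on ${\bf E}[\sup_{0\le t\le T}|X_n(t)-X(t)|^{\beta'}]$ for some $\beta<\beta'<\alpha$ that is independent of $n$, which forces the family $\{\sup_{0\le t\le T}|X_n(t)-X(t)|^\beta\}$ to be uniformly integrable. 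So the real work is the convergence in probability.

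For the convergence in probability, I would first prove a one-dimensional-time estimate of the form $\lim_{n\to\infty}{\bf E}[|X_n(t)-X(t)|^{\alpha-1}]=0$ for each fixed $t\in[0,T]$, playing the role of Lemma \ref{lem:4}. Here the key change is that Komatsu's modulus $\rho$ with $\int_{0+}\rho^{-1}=\infty$ is replaced by the Belfadli--Ouknine structure $|\sigma(x)-\sigma(y)|^\alpha\le|f(x)-f(y)|$ with $f$ merely increasing. The natural substitute for Komatsu's functions $\varphi_m$ and $u_m=u*\varphi_m$ is the Le Gall/Nakao-type device: since $\sigma$ and $\sigma_n$ are bounded below by $d>0$, one works with the space transform associated to $f$, or more directly builds a nonnegative function $g$ (an approximation of $|x|^{\alpha-1}$, or of a suitable primitive) so that $\mathcal L g$ controls $|\sigma_n(x)-\sigma(y)|^\alpha$-type terms against $|f(x)-f(y)|$. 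Applying It\^o's formula to $g(X_n(t)-X(t))$ and taking expectations, the drift term splits (as in Lemma \ref{lem:4}) into a piece comparable to $\sup_x|\sigma_n(x)-\sigma(x)|^\alpha\to0$ by (iii) of (C), a piece controlled by the $f$-modulus that is made small by the Le Gall--Nakao choice of $g$, and the initial-value contribution ${\bf E}[|X_n(0)-X(0)|^{\alpha-1}]\to0$ by hypothesis. Once ${\bf E}[|X_n(t)-X(t)|^{\alpha-1}]\to0$ is in hand, the passage to $\sup_{0\le t\le T}$ in probability goes through Gin\'e--Marcus's inequality exactly as in Lemma \ref{lem:5}: ${\bf P}(\sup_t|X_n(t)-X(t)|>\lambda)$ is bounded by ${\bf P}(|X_n(0)-X(0)|>\lambda/2)$ plus $C\lambda^{-\alpha}\int_0^T{\bf E}[|\sigma_n(X_n(s-))-\sigma(X(s-))|^\alpha]\,ds$, and the integrand tends to $0$ by (iii) of (C), by the $f$-modulus bound in (ii) together with $X_n(s-)\to X(s-)$ in probability and the (one-sided) continuity / Lebesgue convergence argument, since $f$ has at most countably many jumps.

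The main obstacle is the construction of the comparison function $g$ adapted to the Belfadli--Ouknine condition, which is genuinely different from Komatsu's $u_m$. Under condition (C) one no longer has the integral divergence $\int_{0+}\rho^{-1}=\infty$ that made Komatsu's $\varphi_m$ work; instead one must exploit the ellipticity $d\le\sigma\le K$ and the fact that $f$ is increasing. The cleanest route is probably to follow Belfadli--Ouknine's own proof of pathwise uniqueness for \eqref{eq:17}--\eqref{eq:18}, extracting from it the auxiliary function and the local-time / occupation-time estimate they use, and then checking that all the bounds obtained there are \emph{uniform in $n$} (which is plausible because (i) and (ii) of (C) hold with constants $d,K$ and a single $f$ not depending on $n$). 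A secondary technical point is handling the left limits and the possible jumps of $f$ when passing from ${\bf E}[|X_n(s-)-X(s-)|^{\alpha-1}]\to0$ to ${\bf E}[\rho\text{-type}(|X_n(s-)-X(s-)|)]\to0$; since $f$ is increasing it has only countably many discontinuities, so $f(X_n(s-))\to f(X(s-))$ in probability for a.e.\ $s$, and a bounded-convergence argument then finishes the estimate, uniformly in $s\in[0,T]$.

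\begin{proof}
We only indicate the changes relative to the proof of Theorem \ref{th:2}. As in Lemma \ref{lem:4}, the point is to establish, for each fixed $t\in[0,T]$,
\begin{eqnarray*}
\lim_{n\to\infty}{\bf E}[|X_n(t)-X(t)|^{\alpha-1}]=0.
\end{eqnarray*}
Using (i) of (C) (ellipticity $d\le\sigma,\sigma_n\le K$) and (ii) of (C), one constructs, exactly as Belfadli and Ouknine \cite{B-O} do in their proof of pathwise uniqueness, a sequence of sufficiently smooth nonnegative even functions approximating $|x|^{\alpha-1}$ and adapted to the increasing function $f$, so that the generator $\mathcal L$ of $Z$ acting on these functions produces densities that dominate the error terms $|\sigma_n(x)-\sigma(y)|^\alpha$ when integrated against the resulting occupation measure. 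Applying It\^o's formula to these functions evaluated at $X_n(t)-X(t)$, taking expectations, and splitting as in Lemma \ref{lem:4} into an initial-value term, a term bounded by (a constant times) $\sup_x|\sigma_n(x)-\sigma(x)|^\alpha$, and a term made arbitrarily small by the Belfadli--Ouknine choice of approximating functions, one gets the claimed limit; all constants can be taken independent of $n$ because $d$, $K$ and $f$ in (C) do not depend on $n$.

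Next, as in Lemma \ref{lem:5}[a], Gin\'e--Marcus's inequality gives a constant $C>0$ independent of $\lambda$ with
\begin{eqnarray*}
{\bf P}\Bigl(\sup_{0\le t\le T}|X_n(t)-X(t)|>\lambda\Bigr)
&\le& {\bf P}\Bigl(|X_n(0)-X(0)|>\frac{\lambda}{2}\Bigr)\\
& & {}+\Bigl(\frac{\lambda}{2}\Bigr)^{-\alpha}C\int_0^T{\bf E}[|\sigma_n(X_n(s-))-\sigma_n(X(s-))|^\alpha]\,ds\\
& & {}+\Bigl(\frac{\lambda}{2}\Bigr)^{-\alpha}C\int_0^T{\bf E}[|\sigma_n(X(s-))-\sigma(X(s-))|^\alpha]\,ds.
\end{eqnarray*}
The first term tends to $0$ by the hypothesis on the initial data, and the third by (iii) of (C). For the middle term, (ii) of (C) bounds the integrand by ${\bf E}[|f(X_n(s-))-f(X(s-))|]$; by the first part of the proof, $X_n(s-)\to X(s-)$ in probability for each fixed $s$, and since the increasing function $f$ has at most countably many discontinuities, $f(X_n(s-))\to f(X(s-))$ in probability for a.e.\ $s\in[0,T]$. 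As $f$ is bounded on the relevant (almost surely bounded) range of the processes, bounded convergence gives ${\bf E}[|f(X_n(s-))-f(X(s-))|]\to0$, and a further application of Lebesgue's theorem in $s$ (the integrand being uniformly bounded in $s$) shows the middle term tends to $0$. Hence $\sup_{0\le t\le T}|X_n(t)-X(t)|\to0$ in probability.

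Finally, the uniform integrability of $\{\sup_{0\le t\le T}|X_n(t)-X(t)|^\beta,\ n=1,2,\cdots\}$ follows by the same computation as in the proof of Lemma \ref{lem:3}: choosing $\beta<\beta'<\tilde\beta<\alpha$ with $1/\tilde\beta+1/p=1/\beta'$, \'Emery's inequality, Burkholder--Davis--Gundy's inequality and Doob's inequality yield, using (i) of (C),
\begin{eqnarray*}
{\bf E}\Bigl[\sup_{0\le t\le T}|X_n(t)-X(t)|^{\beta'}\Bigr]^{1/\beta'}
\le 2K\,\frac{C_{\tilde\beta}}{c_{\beta'}}\,\frac{\tilde\beta}{\tilde\beta-1}\,{\bf E}[|Z(T)|^{\tilde\beta}]^{1/\tilde\beta}<\infty,
\end{eqnarray*}
a bound independent of $n$. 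Convergence in probability together with this uniform integrability gives
\begin{eqnarray*}
\lim_{n\to\infty}{\bf E}\Bigl[\sup_{0\le t\le T}|X_n(t)-X(t)|^\beta\Bigr]=0
\end{eqnarray*}
for $\beta<\alpha$.
\qed
\end{proof}
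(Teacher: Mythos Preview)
Your overall architecture matches the paper's exactly: a pointwise lemma $\lim_{n\to\infty}{\bf E}[|X_n(t)-X(t)|^{\alpha-1}]=0$ (the paper's Lemma~\ref{lem:6}), then Gin\'e--Marcus plus uniform integrability via \'Emery/BDG/Doob (the paper's Lemma~\ref{lem:7}, which the paper says goes through verbatim from Section~\ref{sec:3}).

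The one place where your sketch is vaguer than the paper is the Lemma~\ref{lem:6} analogue. The paper does \emph{not} build mollifiers ``adapted to $f$'' as you suggest; it uses the same Komatsu-style $u_m=u*\varphi_m$ as before, but now with the simpler bound $0\le\varphi_m(x)\le 1/(m|x|)$ on $a_m<|x|<a_{m-1}$. The $N_n^{(2)}$ term is then handled by choosing $m_n$ with $\varepsilon_n/a_{m_n}\le 1$, and the crucial $N_n^{(1)}$ term is controlled by first localizing (stopping when $|X|\vee|X_n|$ leaves a compact) and then invoking the occupation-time/transition-density inequality from Lemma~2.2 of \cite{B-O}, which yields $N_n^{(1)}\le 2K_\alpha (M+1)\|f\|_\infty d^{-\alpha}m_n^{-1}\sup_{|a|\le M+1}\int_0^{K^\alpha T}p_s(a)\,ds$. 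Conversely, your treatment of the Gin\'e--Marcus step is more careful than the paper's: you correctly flag that $f$ may have (countably many) jumps and argue $f(X_n(s-))\to f(X(s-))$ in probability for a.e.\ $s$, whereas the paper simply asserts that the Section~\ref{sec:3} argument carries over unchanged.
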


For the proof of Theorem \ref{th:3}, we prepare two lemmas.

\begin{lemma}\label{lem:6}
Under the same assumption as in Theorem \ref{th:3},
\begin{eqnarray*}
\lim_{n \to \infty} {\bf E} [|X_n(t) - X(t)|^{\alpha-1} ] = 0,
\quad holds \ for \ t \in [0,T].
\end{eqnarray*}
\end{lemma}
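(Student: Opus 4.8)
The plan is to mimic the proof of Lemma~\ref{lem:4}, replacing the use of condition (B) by condition (C) and, crucially, replacing the Komatsu approximating function $u(x)=|x|^{\alpha-1}$ by a Belfadli--Ouknine type auxiliary function adapted to the increasing function $f$ appearing in (ii) of (C). More precisely, I would first set $v(x):=|x|^{\alpha-1}$ and smooth it by convolving with a sequence $\{\varphi_m\}$ of even mollifiers, but now the bands $a_m<|x|<a_{m-1}$ and the bound $\varphi_m(x)\le 1/(m\,g(x))$ on the middle region should be chosen with respect to a function $g$ built from $f$; the key point of the Belfadli--Ouknine argument is that, since $f$ is increasing, $f(X_n(s))-f(X(s))$ has a sign controlled by the sign of $X_n(s)-X(s)$, so on the support of $\varphi_m(X_n(s)-X(s))$ one can bound $|f(X_n(s))-f(X(s))|$ by the oscillation of $f$ over an interval of length $a_{m-1}$. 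I would also need the standing boundedness $|X_n(0)|\le\tilde M_0$, $|X(0)|\le\tilde M_0$, which together with $d\le\sigma,\sigma_n\le K$ gives, via Gin\'e--Marcus's inequality as in Lemma~\ref{lem:3}/\ref{lem:5}, a priori $L^\beta$ bounds on $\sup_{t\le T}|X_n(t)|$ uniform in $n$; this confines the processes to a fixed compact set with high probability and lets one treat $f$ as if it were bounded there.

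The main steps, in order, are: (1) apply It\^o's formula to $u_{m}(X_n(t)-X(t))$ where $u_m=v\ast\varphi_m$, using that the driving martingale part has predictable quadratic variation controlled by $|\sigma_n(X_n(s-))-\sigma(X(s-))|^\alpha$, so that after taking expectations one is left with a drift term $K_\alpha\,{\bf E}\!\int_0^t|\sigma_n(X_n(s-))-\sigma(X(s-))|^\alpha\varphi_{m}(X_n(s)-X(s))\,ds$ plus the initial-value discrepancy ${\bf E}[|X_n(0)-X(0)|^{\alpha-1}]$ plus an $O(a_{m-1}^{\alpha-1})$ error from (\ref{eq:6}); (2) split $|\sigma_n(X_n)-\sigma(X)|^\alpha\le 2|\sigma_n(X_n)-\sigma_n(X)|^\alpha+2|\sigma_n(X)-\sigma(X)|^\alpha$, bound the first piece by $2|f(X_n(s))-f(X(s))|$ via (ii) of (C) and the second by $2\sup_x|\sigma_n(x)-\sigma(x)|^\alpha=:2\varepsilon_n\to0$ via (iii); (3) on the event $\{|X_n(s)-X(s)|\in(a_{m},a_{m-1})\}$ — which is all that matters because $\varphi_m$ vanishes off it — estimate $|f(X_n(s))-f(X(s))|$ by the supremum of $f$-increments over length-$a_{m-1}$ intervals of $[-R,R]$ (with $R$ chosen from the a priori bound), and choose the mollifier so that this is absorbed by the factor $1/(m\,g(\cdot))$, yielding a contribution of order $1/m$ (or using continuity of $f$ and dominated convergence, a contribution that vanishes as $m\to\infty$); (4) as in Lemma~\ref{lem:4}, choose $m=m_n\to\infty$ slowly enough that all error terms — the $\varepsilon_n$-term, the $a_{m_n-1}^{\alpha-1}$-term, the $1/m_n$-term, and the initial-value term — tend to $0$; (5) pass from ${\bf E}[u_{m_n}(X_n(t)-X(t))]\to0$ to ${\bf E}[|X_n(t)-X(t)|^{\alpha-1}]\to0$ using the left inequality in (\ref{eq:6}) and $u_{m_n}(x)\to|x|^{\alpha-1}$.

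The main obstacle I anticipate is step~(3): in condition (C) the function $f$ is merely increasing, so it may be discontinuous and need not be Lipschitz or H\"older, and the naive bound $|f(X_n(s))-f(X(s))|\le(\text{oscillation of }f)$ over a small interval need not be small pointwise. The Belfadli--Ouknine device handles this by exploiting monotonicity together with a careful choice of the breakpoints $a_m$ tied to $f$ (one typically uses the right-continuous inverse or a measure-theoretic decomposition of $f$ into absolutely continuous, singular continuous and jump parts, and constructs $\varphi_m$ so that $\int \varphi_m\,df$ is controlled); I would follow \cite{B-O} for the precise construction, since reproducing it is the technical heart of the matter. A secondary nuisance, common to all these lemmas, is the bookkeeping with left limits $X_n(s-)$ versus $X_n(s)$ inside $\varphi_{m_n}$, but since for each fixed $s$ one has $X_n(s)=X_n(s-)$ a.s. and the integrals are with respect to $ds$, this causes no real difficulty. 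Once Lemma~\ref{lem:6} is established, Theorem~\ref{th:3} will follow exactly as Theorems~\ref{th:1} and \ref{th:2} did, via a Gin\'e--Marcus estimate giving convergence in probability of $\sup_{t\le T}|X_n(t)-X(t)|$ together with uniform integrability of its $\beta$-th power.
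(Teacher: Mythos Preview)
Your overall skeleton (It\^o formula on $u_m=|x|^{\alpha-1}\ast\varphi_m$, split $|\sigma_n(X_n)-\sigma(X)|^\alpha$ into two pieces, couple $m=m_n$ to $\varepsilon_n$) matches the paper. The gap is in step~(3), and your description of the Belfadli--Ouknine device is not what actually happens.

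In the paper the mollifier is \emph{not} built from $f$: one simply takes $\varphi_m(x)\le 1/(m|x|)$ on $(a_m,a_{m-1})$ with $a_m\downarrow 0$, exactly the function $g(x)=|x|$. Your idea of bounding $|f(X_n(s))-f(X(s))|$ by the oscillation of $f$ over an interval of length $a_{m-1}$ cannot work, since $f$ is merely increasing and may jump; no choice of breakpoints tied to $f$ rescues this. The Belfadli--Ouknine argument does not try to make that increment small. Instead, with $\varphi_{m}\le 1/(m|x|)$ the troublesome term becomes
\[
N_n^{(1)}\;\le\;\frac{2K_\alpha}{m_n}\,{\bf E}\!\int_0^t \frac{|f(X_n(s-))-f(X(s-))|}{|X_n(s-)-X(s-)|}\,ds,
\]
and this difference-quotient integral is controlled by an \emph{occupation time / potential} estimate for the stable process. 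One first localises by stopping $X$ and $X_n$ at the exit of a large ball, so $|X|\vee|X_n|\le M$ and $\|f\|_\infty<\infty$ there; then the inequality at the end of the proof of Lemma~2.2 in \cite{B-O} (letting the parameter $\lambda\downarrow 0$) gives
\[
N_n^{(1)}\;\le\;2K_\alpha\,\frac{(M+1)\,\|f\|_\infty}{d^{\alpha}\,m_n}\,\sup_{|a|\le M+1}\int_0^{K^\alpha T}p_s(a)\,ds,
\]
where $p_s$ is the transition density of $Z$. The $1/m_n$ prefactor then kills it.

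The essential ingredient you never invoke is the lower bound $\sigma,\sigma_n\ge d>0$ from (i) of~(C): it is what makes the occupation density of $X_n$ comparable to that of a (time-changed) stable process and produces the factor $d^{-\alpha}$ and the integral of $p_s$ above. Without that non-degeneracy the estimate on $N_n^{(1)}$ fails, and no purely ``oscillation of $f$'' argument can replace it.
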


\begin{proof}
\smartqed

Put $u(x):=|x|^{\alpha-1}$.
Choose a sequence $ \{ a_m \} $ such that $ 1=a_0 > a_1 > \cdots, \ a_m \downarrow 0$.
For this choice, we can find a sequence $ \{ \varphi_m \} \ m=1,2,\cdots$ of sufficiently smooth even functions such that $\int_{-\infty}^\infty \varphi_m(x)dx = 1$ and
\begin{eqnarray*}
\varphi_m(x)=\begin{cases}
          0  & |x| \leq a_m \\
          {\rm between} \ 0 \ {\rm and} \ 1/(mx)  & a_m < |x| < a_{m-1} \\
          0  & |x| \geq a_{m-1}
          \end{cases} .
\end{eqnarray*}
Put $u_m := u \ast \varphi_m$, then ${\cal L} u_m = K_\alpha \varphi_m$, where $K_\alpha = -2 \pi \alpha^{-1} \cot(\alpha \pi / 2)$.
Choose also a sequence $\{ \varepsilon_n \} $ of positive numbers such that $\varepsilon_n \downarrow 0$ and
\begin{eqnarray*}
\sup_x |\sigma_n(x) - \sigma(x)| \leq \varepsilon_n .
\end{eqnarray*}
For this choice, we can find a sequence $\{ m_n\}$ of positive integers such that
\begin{eqnarray*}
\varepsilon_n \frac{1}{a_{m_n}} \leq 1 .
\end{eqnarray*}
By It\^{o} formula,
\begin{eqnarray*}
\lefteqn{ {\bf E} [u_{m_n}(X_n(t) - X(t))]  \hspace{1cm}} \\
&=& {\bf E} [u_{m_n}(X_n(0) - X(0))] \\
& & {} + {\bf E} \Bigl[ K_\alpha \int_0^t |\sigma_n(X_n(s-)) - \sigma(X(s-))|^\alpha \varphi_{m_n}(X_n(s-) - X(s-))ds \Bigr] \\
&\leq& {\bf E} [|X_n(0) - X(0)|^{\alpha - 1}] +a_{m_n-1}^{\alpha - 1} \\
& & {} + 2 K_\alpha {\bf E} \Bigl[ \int_0^t |\sigma_n(X_n(s-)) - \sigma_n(X(s-))|^\alpha \varphi_{m_n}(X_n(s-) - X(s-))ds \Bigr] \\
& & {} + 2 K_\alpha {\bf E} \Bigl[ \int_0^t |\sigma_n(X(s-)) - \sigma(X(s-))|^\alpha \varphi_{m_n}(X_n(s-) - X(s-))ds \Bigr] \\
&=& {\bf E} [|X_n(0) - X(0)|^{\alpha - 1}] +a_{m_n-1}^{\alpha - 1} + N_n^{(1)} + N_n^{(2)} \ {\rm say.}
\end{eqnarray*}
Obviously we have $ \lim_{n \to \infty} {\bf E} [|X_n(0) - X(0)|^{\alpha - 1}] = 0$ and $ \lim_{n \to \infty} a_{m_n - 1}^{\alpha - 1} = 0 $.
For $N_n^{(2)}$, we have
\begin{eqnarray*}
N_n^{(2)} &\leq& 2 K_\alpha {\bf E} \Bigl[ \int_0^T \varepsilon_n^\alpha \frac{1}{m_n a_{m_n}}ds \Bigr] \\
&\leq& \frac{2 K_\alpha T}{m_n}.
\end{eqnarray*}
So $\lim_{n \to \infty} N_n^{(2)} = 0 $.

Finally we will discuss $N_n^{(1)}$.
By stopping $X$ and $X_n$, when one of them first leaves a compact set, we can assume $|X| \vee |X_n| \leq M $ for every $t \geq 0$.
Letting $\lambda \downarrow 0$ in the last inequality in the proof of Lemma 2.2 in \cite{B-O}.
We get the following inequality:
\begin{eqnarray*}
N_n^{(1)} \leq 2 K_\alpha \frac{(M+1) \|f\|_\infty}{d^\alpha m_n} \sup_{|a| \leq M+1} \Bigl( \int_0^{K^\alpha T}p_s(a)ds \Bigr)
\end{eqnarray*}
where $\|f\|_\infty = \sup_x |f(x)|$ and $p_s(a) = p_s(a,0)$ is the transition density function of $Z(s)$.
So, we can conclude $\lim_{n \to \infty} N_n^{(1)} = 0$.
The proof of Lemma \ref{lem:6} is achieved.
\qed
\end{proof}

\begin{lemma}\label{lem:7}
Under the same assumption as in Theorem \ref{th:3},
\begin{enumerate}[{\rm {[}a{]}}]
\item
$\sup_{0 \leq t \leq T}|X_n(t) - X(t)| \to 0 $ in probability ($ n \to \infty $),
\item
the family of random variables
\begin{eqnarray*}
\Bigl\{ \sup_{0 \leq t \leq T} | X_n(t) - X(t) |^\beta , \ n = 1, 2, \cdots \Bigr\}
\end{eqnarray*}
is uniformly integrable.
\end{enumerate}
\end{lemma}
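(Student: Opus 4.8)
The plan is to mirror the structure used for Lemma \ref{lem:3} and Lemma \ref{lem:5}, which is the natural template here. For part [a], I would start from the decomposition
\begin{eqnarray*}
{\bf P} \Bigl( \sup_{0 \leq t \leq T} |X_n(t) - X(t)| > \lambda \Bigr)
&\leq& {\bf P} \Bigl( |X_n(0) - X(0)| > \frac{\lambda}{2} \Bigr) \\
& & {} + {\bf P} \Bigl( \sup_{0 \leq t \leq T} \Bigl| \int_0^t \{\sigma_n(X_n(s-)) - \sigma(X(s-))\} dZ(s) \Bigr| > \frac{\lambda}{2} \Bigr),
\end{eqnarray*}
then apply Gin\'{e}--Marcus's inequality to the stochastic-integral term and split the integrand as $\sigma_n(X_n(s-)) - \sigma(X(s-)) = \{\sigma_n(X_n(s-)) - \sigma_n(X(s-))\} + \{\sigma_n(X(s-)) - \sigma(X(s-))\}$. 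This gives three terms $J_n^{(1)}, J_n^{(2)}, J_n^{(3)}$ exactly as in Lemma \ref{lem:5}. The term $J_n^{(1)}$ vanishes by the hypothesis on the initial data, and $J_n^{(3)}$ vanishes by (iii) of (C) (uniform convergence $\sup_x|\sigma_n - \sigma| \to 0$). For $J_n^{(2)}$, use (ii) of (C) to bound ${\bf E}[|\sigma_n(X_n(s-)) - \sigma_n(X(s-))|^\alpha] \leq {\bf E}[|f(X_n(s-)) - f(X(s-))|]$; since Lemma \ref{lem:6} gives $X_n(s-) \to X(s-)$ in probability for fixed $s$, and since $|X|\vee|X_n| \leq \tilde{M}_0$ allows us to replace $f$ by a bounded modification on the relevant compact set, we need $f$ to be handled at its (at most countably many) discontinuities — but $X(s-)$ has a continuous distribution for $1<\alpha<2$ (or one argues via a continuity point approximation of $f$), so ${\bf E}[|f(X_n(s-)) - f(X(s-))|] \to 0$, and then bounded convergence in $s$ over $[0,T]$ finishes $J_n^{(2)}$.

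For part [b], I would repeat verbatim the moment estimate from the proof of Lemma \ref{lem:3}: choose exponents $\beta < \beta' < \tilde\beta < \alpha$ with $1/\tilde\beta + 1/p = 1/\beta'$, then chain Burkholder--Davis--Gundy, \'Emery's inequality, the boundedness $|\sigma_n| \vee |\sigma| \leq K$ from (i) of (C) (giving a bound $2K$ on the integrand of the difference), Burkholder--Davis--Gundy again for $Z$, and Doob's inequality, to obtain
\begin{eqnarray*}
{\bf E} \Bigl[ \sup_{0 \leq t \leq T} |X_n(t) - X(t)|^{\beta'} \Bigr]^{1/\beta'}
\leq C(K,\beta',\tilde\beta)\, {\bf E}[|Z(T)|^{\tilde\beta}]^{1/\tilde\beta} < \infty,
\end{eqnarray*}
with the right-hand side independent of $n$. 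Since $\beta' > \beta$, the family $\{\sup_{0\leq t\leq T}|X_n(t) - X(t)|^\beta\}$ is bounded in $L^{\beta'/\beta}$ with $\beta'/\beta > 1$, hence uniformly integrable. Note the initial-value term contributes $|X_n(0) - X(0)|^{\beta'}$ which is also uniformly bounded since $|X_n(0)|, |X(0)| \leq \tilde{M}_0$; this only adds a constant.

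The main obstacle is the $J_n^{(2)}$ argument: unlike the Komatsu-condition case (Lemma \ref{lem:5}), here the control function $f$ in (ii) of (C) is merely increasing, hence possibly discontinuous, so $X_n(s-) \to X(s-)$ in probability does not instantly give ${\bf E}[\rho(\cdot)] \to 0$ as it did there with the continuous $\rho$. I expect to resolve this by exploiting that, on a fixed compact set $[-\tilde{M}_0, \tilde{M}_0]$, $f$ has at most countably many jumps and by approximating $f$ from above and below by continuous increasing functions off an arbitrarily small neighbourhood of its jump set, together with the fact that $Z(s)$ — and therefore $X(s)$, since it has a density by the results underlying the Belfadli--Ouknine framework — charges no single point; a Lebesgue dominated convergence argument (the integrand is bounded by $\|f\|_\infty$ on the compact set after stopping) then closes $J_n^{(2)}$ and hence [a]. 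Everything else is routine repetition of the earlier lemmas.
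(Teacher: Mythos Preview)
Your overall strategy matches the paper's, which for Lemma \ref{lem:7} simply states that the proof ``follows just the same way as in section \ref{sec:3}'' (i.e., repeat Lemma \ref{lem:5} for [a] and the uniform-integrability estimate of Lemma \ref{lem:3} for [b]). Your treatment of [b] is correct and is exactly that repetition.

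For [a] you go beyond the paper by flagging a genuine subtlety the paper glosses over: under (C) the modulus $\rho$ of Lemma \ref{lem:5} is replaced by $|f(\cdot)-f(\cdot)|$ with $f$ merely increasing, so the step ``$\rho$ continuous, $\rho(0)=0$'' does not transfer verbatim. Your remedy --- use that the law of $X(s-)$ is absolutely continuous for $s>0$ (which holds here since $\sigma\ge d>0$, so $X$ is a time-changed symmetric $\alpha$-stable process), hence $X(s-)$ is a.s.\ a continuity point of $f$, and therefore $f(X_n(s-))-f(X(s-))\to 0$ in probability via Lemma \ref{lem:6} --- is the right idea. There is, however, a slip in the execution: the claim $|X|\vee|X_n|\le \tilde M_0$ is false, since $\tilde M_0$ bounds only the initial data, not the processes themselves. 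Fortunately you do not need it. Because $|\sigma_n|\le K$ by (i) of (C), the quantity $|\sigma_n(X_n(s-))-\sigma_n(X(s-))|^\alpha$ is already uniformly bounded by $(2K)^\alpha$; once your continuity-point argument gives convergence to $0$ in probability, bounded convergence (with dominating constant $(2K)^\alpha$, not $\|f\|_\infty$) yields ${\bf E}[|\sigma_n(X_n(s-))-\sigma_n(X(s-))|^\alpha]\to 0$ directly, and then dominated convergence in $s\in[0,T]$ with the same constant closes $J_n^{(2)}$. No localisation of $f$ or stopping of $X,X_n$ is required at this stage.
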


The proof of Lemma \ref{lem:7} and Theorem \ref{th:3} follows just the same way as in section \ref{sec:3}.
\begin{acknowledgement}
This work is motivated by fruitful discussions with Toshio Yamada. 
The author would like to thank him very much. 
Professor Shinzo Watanabe reviewed orginal manuscript and offered some polite suggestions.
The author also would like to thank him very much for his valuable comments.
\end{acknowledgement}

\end{document}